\documentclass[11pt,reqno]{amsart}   	
\usepackage{geometry}                		
\geometry{letterpaper} 
\usepackage[utf8]{inputenc}
\usepackage{graphicx}
\usepackage[english]{babel}
\usepackage{amsmath}
\usepackage{amsfonts}
\usepackage{amsthm}
\usepackage{color}
\usepackage{float}
\usepackage{caption}
\usepackage{subcaption}
\usepackage[noend,algonl]{algorithm2e}
\SetKwProg{Fn}{Function}{}{end}

\newtheorem{claim}{Lemma}
\newtheorem{proposition}{Proposition}
\newtheorem{theorem}{Theorem}
\newtheorem*{corollary}{Corollary}
\newtheorem*{remark}{Remark}
\newtheorem{definition}{Definition}

\newcommand{\K}{\mathcal{K}}
\graphicspath{{./}}

\title{Born and inverse Born series for scattering problems with Kerr nonlinearities}

\author{Nicholas DeFilippis}
\address{Department of Mathematics, Drexel University, Philadelphia, PA, USA}
\email{}

\author{Shari Moskow}
\address{Department of Mathematics, Drexel University, Philadelphia, PA, USA }
\email{moskow@math.drexel.edu}

\author{John C. Schotland }
\address{Department of Mathematics and Department of Physics, Yale University, New Haven, CT, USA}
\email{john.schotland@yale.edu}

\date{\today}

\begin{document}

\maketitle
\begin{abstract} 
We consider the Born and inverse Born series for scalar waves with a cubic nonlinearity of Kerr type. We find a recursive formula for the operators in the Born series and prove their boundedness. This result gives conditions which guarantee convergence of the Born series, and subsequently yields conditions which guarantee convergence of the inverse Born series. We also use fixed point theory to give alternate explicit conditions for convergence of the Born series. We illustrate our results with numerical experiments.
\end{abstract}

\section{Introduction}
There has been considerable recent interest in inverse scattering problems for nonlinear partial differential equations (PDEs)~\cite{assylbekov_1,assylbekov_2,carstea,imanuvilov,isakov_1,isakov_2,isakov_3,isakov_4,kang,kurylev,lassas}. There 
are numerous applications in various applied fields ranging from optical imaging to seismology. In general terms, the problem to be considered is to reconstruct the coefficients of a nonlinear PDE from boundary measurements. As in the case of inverse problems for linear PDEs, the fundamental questions relate to the 
uniqueness, stability and reconstruction of the unknown coefficients. In contrast to the linear case (which is very well studied), the study of inverse problems for nonlinear PDE is still relatively unexplored.  There are uniqueness and stability results for a variety of semilinear and quasilinear equations. Reconstruction methods for such problems are just beginning to be developed\cite{carstea,griesmaier,kang}.

In this paper, we consider the inverse problem of recovering the coefficients of a nonlinear elliptic PDE with cubic nonlinearity. This problem appears in optical physics, where the cubic term arises in the study of the Kerr effect---a nonlinear process where self-focusing of light is observed~\cite{boyd}. We show that it is possible to reconstruct the coefficients of the linear and nonlinear terms in a Kerr medium from boundary measurements. This result holds under a smallness condition on the measurements, which also guarantees the stability of the recovery. The reconstruction is based on inversion of the Born series, which expresses the solution to the inverse problem as an explicitly computable functional of the measured data. We note that this method has been extensively studied  for linear PDEs~\cite{review}. The extension to the nonlinear setting involves a substantial reworking of the theory, especially the combinatorial structure of the Born series itself. We validate our results with numerical simulations that demonstrate the convergence of the inverse Born series under the expected smallness conditions.

The remainder of this paper is organized as follows. In section 2, we introduce the forward problem and state sufficient conditions for its solvability. The Born series is studied in section 3, the combinatorial structure of the series is characterized, and sufficient conditions for convergence are established. We also derive various estimates that are later used in section 4 to obtain our main result on the convergence of the inverse Born series. In section 5, we present the results of numerical reconstructions of a two-dimensional medium. Our conclusions are presented in section 6. The Appendix contains the proof of Proposition~1.

\section{Forward problem}

We consider the Kerr effect in a bounded domain $\Omega$ in $\mathbb{R}^d$ for $d>2$ with a smooth boundary $\partial\Omega$.
The scalar field $u$ obeys the nonlinear PDE
\begin{align}
\label{baseequation}
\Delta u + k^2(1 + \alpha(x))u + k^2\beta(x) |u|^2  u &= 0 \quad \text{ in } \quad \Omega \ , \\
\frac{\partial u}{\partial \nu } &= g \quad  \text{ on } \quad \partial\Omega \ ,
\end{align} 
where $k$ is the wavenumber and $\nu$ is the unit outward normal to $\partial\Omega$. The coefficients $\alpha$ and $\beta$ are the linear and nonlinear susceptibilities, respectively~\cite{boyd} and are taken to be real valued, as is the boundary source $g$. It follows that $u$ is real valued so that $|u|^2u= u^3$. More generally, $u$ is complex valued, in which case our results carry over with small modifications.  

We now consider the solution $u_0$ to the linear problem 
\begin{align}
\label{backgroundequation}
    \Delta u_0 + k^2u_0 &= 0 \quad \text{ in } \quad \Omega \ , \\
    \frac{\partial u_0}{\partial \nu } &= g   \quad  \text{ on } \quad \partial\Omega \ .
    \end{align}
Following standard procedures, we find that the field $u$ obeys the integral equation
\begin{equation}\label{integralequation}
    u(x) = u_0(x) - k^2\int_\Omega G(x, y) \left(\alpha(y)u(y) + \beta(y)u^3(y)\right)dy \ .
\end{equation}
Here the Green's function $G$ obeys
\begin{align}
	\Delta_x G(x, y) + k^2 G(x, y)  &= \delta(x-y)  \quad \text{ in } \quad \Omega \ , \\
	 \frac{\partial G }{\partial \nu_y} &= 0   \quad  \text{ on } \quad \partial\Omega \ .
\end{align}
 
We define the nonlinear operator $T: C(\overline{\Omega})\rightarrow C(\overline{\Omega})$ by
\begin{equation}\label{Tdef}
    T(u) = u_0 - k^2\int_\Omega G(x, y) \left(\alpha(y)u(y) + \beta(y) u^3(y) \right) dy.
\end{equation}
We note that if $u\in C(\overline{\Omega})$ is a fixed point of $T$, that is $u = T(u)$, then $u$ satisfies equation (\ref{integralequation}). 

The following result provides conditions for existence of a unique solution to (\ref{integralequation}). 
\begin{proposition}
\label{bounds}  
Let $T: C(\overline{\Omega})\rightarrow C(\overline{\Omega})$ be defined by (\ref{Tdef}) and define $\mu$ by 
\begin{equation}\label{mudef1} 
 \mu = k^2\sup_{x\in \Omega} \int_\Omega | G(x,y) | dy. 
 \end{equation}
If there exists  $\gamma > 1/2 $ such that $$\Vert\alpha\Vert_\infty < \frac{2\gamma-1}{2\mu (1+\gamma)}$$ and $$\Vert\beta\Vert_\infty < \frac{1}{2\mu \Vert u_0\Vert^2(1+\gamma)^3},$$ then $T$ has a unique fixed point on the ball  of radius $\gamma\| u_0\|_\infty$ about $u_0$ in $C(\overline{\Omega})$.
\end{proposition}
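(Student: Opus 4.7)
The plan is to apply the Banach fixed point theorem to $T$ on the closed ball
$B=\{u\in C(\overline{\Omega}):\|u-u_0\|_\infty\le \gamma\|u_0\|_\infty\}$,
which is a complete metric space as a closed subset of $C(\overline{\Omega})$. Two things must be verified: that $T$ maps $B$ into $B$, and that $T$ is a strict contraction on $B$. Both will follow from the pointwise estimate
$k^2\int_\Omega |G(x,y)|\,dy\le \mu$ together with the two hypotheses on $\|\alpha\|_\infty$ and $\|\beta\|_\infty$. Note that for any $u\in B$ the triangle inequality gives $\|u\|_\infty\le (1+\gamma)\|u_0\|_\infty$, which is the bound that will be fed into all subsequent estimates.

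For the self-mapping property, I would bound
\[
\|T(u)-u_0\|_\infty \;\le\; \mu\bigl(\|\alpha\|_\infty\|u\|_\infty+\|\beta\|_\infty\|u\|_\infty^{3}\bigr)
\;\le\; \mu\|\alpha\|_\infty(1+\gamma)\|u_0\|_\infty+\mu\|\beta\|_\infty(1+\gamma)^{3}\|u_0\|_\infty^{3},
\]
and then insert the two assumed inequalities to see that the right-hand side is strictly less than $\bigl(\tfrac{2\gamma-1}{2}+\tfrac12\bigr)\|u_0\|_\infty=\gamma\|u_0\|_\infty$, so $T(u)\in B$.

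For the contraction estimate, I would factor the cubic difference as $u^{3}-v^{3}=(u-v)(u^{2}+uv+v^{2})$, bound $|u^{2}+uv+v^{2}|\le 3(1+\gamma)^{2}\|u_0\|_\infty^{2}$ for $u,v\in B$, and obtain
\[
\|T(u)-T(v)\|_\infty \;\le\; \mu\bigl(\|\alpha\|_\infty+3\|\beta\|_\infty(1+\gamma)^{2}\|u_0\|_\infty^{2}\bigr)\|u-v\|_\infty.
\]
Applying the two hypotheses, the prefactor is strictly less than $\tfrac{2\gamma-1}{2(1+\gamma)}+\tfrac{3}{2(1+\gamma)}=1$, so $T$ is a strict contraction on $B$. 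The Banach fixed point theorem then furnishes a unique fixed point of $T$ in $B$, and by (\ref{Tdef}) this fixed point solves (\ref{integralequation}).

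The calculation is essentially linear algebra with the constants, but the delicate point is the split of the budget: the two hypotheses are designed so that the linear term consumes exactly $\tfrac{2\gamma-1}{2(1+\gamma)}$ of the contraction constant while the cubic term, after using the factor of $3$ from $u^{2}+uv+v^{2}$, consumes exactly $\tfrac{3}{2(1+\gamma)}$, summing to $1$. The main (minor) obstacle is verifying that these algebraic sums match both the self-map bound $\gamma$ and the contraction bound $1$ simultaneously; once that is checked, the rest of the argument is automatic.
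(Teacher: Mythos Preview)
Your proposal is correct and follows essentially the same approach as the paper: set $r=\gamma\|u_0\|_\infty$, $R=(1+\gamma)\|u_0\|_\infty$, verify self-mapping via $\mu R(\|\alpha\|+R^2\|\beta\|)<r$ and contraction via $\mu(\|\alpha\|+3R^2\|\beta\|)<1$, then apply Banach. Your arithmetic is in fact cleaner than the paper's, which inadvertently drops the factor of $3$ in the contraction step (yielding $\tfrac{\gamma}{1+\gamma}$ instead of the correct strict upper bound of $1$); since the hypotheses are strict inequalities, the conclusion is unaffected either way.
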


The proof is presented in Appendix \ref{fixedpointappendix}.

\section{Born series}
The forward problem is to compute the field $u$ as measured on $\partial\Omega$ when $g$ corresponds to a point source on $\partial\Omega$.
The solution to the forward problem is derived by iteration of the integral equation (\ref{integralequation}). We thus obtain
\begin{equation} \label{born_series}
    \phi = K_1 (\zeta) + K_2 (\zeta, \zeta) + K_3( \zeta , \zeta,  \zeta )+\cdots \ ,
\end{equation} 
where $\phi=u-u_0$ and $\zeta :=  (\alpha, \beta)$. The forward operators $$K_n: [L^\infty(\Omega)]^{2n}\rightarrow C(\partial\Omega\times\partial\Omega)$$ are constructed below. We will refer to (\ref{born_series}) as the the Born series.
We note that Proposition~\ref{bounds} guarantees convergence of the Born series.

The forward operator $K_n$ is an $n$-linear operator (multilinear of order $n$) on $[ L^\infty(\Omega)^2]^n$. 
In the following, we do not denote the dependence of $u_0$ on the source explicitly. The first term in the fixed point iteration is 
\begin{equation}
 u_1(x) : = T(u_0)(x) = u_0(x) +  k^2\int_\Omega G(x, y) \left[\alpha(y) u_0(y)+ \beta(y)  u_0^3(y)\right]dy \end{equation}
and thus $K_1$ is defined by
\begin{equation}     K_1( \zeta) (x) = k^2\int_\Omega G(x, y) \left[\alpha(y) u_0(y)+ \beta(y) u_0^3(y)\right]dy . 
\end{equation}
Next we observe that
\begin{equation}
u_2(x) : = T(u_1)(x) = u_0(x) + k^2\int_\Omega G(x, y) \left[\alpha(y) u_1(y) + \beta  (y) u_1^3(y)\right]dy.
 \end{equation}
Evidently, expansion of $u_1^3$ leads to terms which are multilinear in $\alpha$ and $\beta$. Subsequent iterates become progressively more complicated. To handle this problem, we introduce the  operators:  $a,b: C(\overline{\Omega}) \times [ L^\infty(\Omega)]^2\rightarrow C(\overline{\Omega})$, which are defined by
\begin{equation} \label{neq}  a(v, \zeta) = k^2\int_{\Omega} G(x, y) \alpha (y) v(y) dy , \end{equation}
and
\begin{equation} \label{beq} b(v, \zeta  ) = k^2 \int_{\Omega} G(x, y) \beta (y) v(y) dy . \end{equation} 

The above operators have tensor counterparts which are defined as follows.
\begin{definition} \label{def1}  Given $T_l = T_l (\zeta_1,\cdots,\zeta_l ),$  a multi-linear operator of order $l$,  define the  $l+1$ order multilinear operators  $BT_l$  and $AT_l$ by
$$ BT_l(\zeta_1, \ldots , \zeta_l, \zeta_{l+1} )= b( T_l (\zeta_1, \ldots , \zeta_l ) , \zeta_{l+1} )$$
and
$$ AT_l(\zeta_1, \ldots , \zeta_l, \zeta_{l+1} )= a( T_l (\zeta_1, \ldots , \zeta_l ) , \zeta_{l+1} )$$
where $b$ and $n$ are given by (\ref{beq}) and (\ref{neq}) respectively.
\end{definition}
We will also need a tensor product of multilinear operators.
\begin{definition} \label{def2} 
Given $T_j$ and $T_l$, multilinear operators of order $j$ and $l$ respectively, define the tensor product $T_l\otimes T_j$ by
$$ T_l\otimes T_j (\zeta_1, \ldots , \zeta_l, \zeta_{l+1},\dots, \zeta_{l+j} )=  T_l (\zeta_1, \ldots , \zeta_l ) T_j (\zeta_{l+1},\dots, \zeta_{l+j} ).$$
and note that $T_l\otimes  T_j$ is a multilinear operator of order $l+j$. 
\end{definition}
Note that the tensor product of multilinear operators does not commute. Tensor products are extended to sums of multilinear operators by bilinearity of the tensor product, and the tensor product is also associative.  In this notation, we see that if $v$ is a sum of multilinear operators, then 
$$ Tv = u_0 + A v+ B v\otimes v \otimes v $$
yields another sum of multilinear operators ($u_0$ is an order zero operator).
\begin{claim} Viewing the $n$th iterate $u_n$ as a sum of multilinear operators, for any $n$ we have that 
\begin{equation} u_{n} = u_{n-1} + \mbox{multilinear operators of order}\  \geq n . \end{equation}
\label{termslem}\end{claim}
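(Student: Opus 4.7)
The plan is to prove the claim by induction on $n$, using the recursion $u_n = T(u_{n-1}) = u_0 + A u_{n-1} + B(u_{n-1} \otimes u_{n-1} \otimes u_{n-1})$ and the fact that the operations $A$, $B$, and $\otimes$ interact nicely with the notion of order. Specifically, by Definition~\ref{def1}, applying $A$ or $B$ to a multilinear operator of order $l$ produces one of order $l+1$, and by Definition~\ref{def2}, the tensor product of multilinear operators of orders $j$ and $l$ has order $j+l$. Both $A$ and $B$ extend by linearity to sums of multilinear operators of different orders, preserving the minimum order (up to $+1$).

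For the base case $n=1$, note that $u_0$ is a multilinear operator of order $0$ and $u_1 - u_0 = A u_0 + B(u_0 \otimes u_0 \otimes u_0)$, which is a sum of multilinear operators of order $1$, so the claim holds.

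For the inductive step, suppose $u_n = u_{n-1} + R_n$ where $R_n$ is a sum of multilinear operators all of order $\geq n$. Then
\begin{equation*}
u_{n+1} - u_n = T(u_n) - T(u_{n-1}) = A R_n + B\bigl(u_n^{\otimes 3} - u_{n-1}^{\otimes 3}\bigr).
\end{equation*}
The first term $A R_n$ is a sum of multilinear operators of order $\geq n+1$. For the second term, I would write $u_n = u_{n-1} + R_n$ and expand $u_n^{\otimes 3} - u_{n-1}^{\otimes 3}$ using bilinearity of $\otimes$: the result is a sum of seven tensor monomials, each containing at least one factor of $R_n$ and the remaining factors being $u_{n-1}$ (a sum of multilinear operators of order $\geq 0$). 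Since orders add under $\otimes$, every such monomial has order $\geq n$, and after applying $B$ the order becomes $\geq n+1$.

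Combining the two contributions gives $u_{n+1} - u_n$ as a sum of multilinear operators of order $\geq n+1$, completing the induction. The only real bookkeeping obstacle is being careful that $u_{n-1}$, when viewed as a sum of multilinear operators of mixed orders, contributes a nonnegative order in each tensor slot so that the presence of a single $R_n$ factor suffices to push the total order up to $n$; this is immediate since order-$0$ terms like $u_0$ do not decrease the sum.
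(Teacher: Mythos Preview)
Your proof is correct and follows essentially the same inductive strategy as the paper: both use the recursion $u_{n}=u_0+Au_{n-1}+B\,u_{n-1}^{\otimes 3}$, expand the cubic tensor using the inductive hypothesis, and note that every surviving term contains a factor of high enough order before $A$ or $B$ raises the order by one. The only cosmetic difference is that you compute the difference $u_{n+1}-u_n=T(u_n)-T(u_{n-1})$ directly, whereas the paper substitutes $u_{n-1}=u_{n-2}+w$ into the formula for $u_n$ and then recognizes $u_0+Au_{n-2}+B\,u_{n-2}^{\otimes 3}=u_{n-1}$; the underlying argument is identical.
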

\begin{proof}
We will prove this by induction. For the base case $n=1$, we have that $u_1 = u_0 + Au_0+Bu_0\otimes u_0 \otimes u_0$, so the statement holds.
Now assume that the statement holds for $u_{n-1}$. Then
\begin{eqnarray} u_n 
&=& u_0 + Au_{n-1} + Bu_{n-1}\otimes u_{n-1} \otimes u_{n-1} . \end{eqnarray}
By inductive hypothesis, 
$$u_{n-1} = u_{n-2}+ w $$
where $w$ is a sum of operators of order at least $n-1$. Hence we have that 
\begin{multline} u_{n-1}\otimes u_{n-1} \otimes u_{n-1} = u_{n-2}\otimes u_{n-2}\otimes u_{n-2} + u_{n-2}\otimes u_{n-2} \otimes w  \\+ w\otimes u_{n-2}\otimes u_{n-2} +  u_{n-2}\otimes w \otimes u_{n-2}  \\  +  u_{n-2} \otimes w\otimes w +  w\otimes u_{n-2} \otimes w +  u_{n-2}\otimes w\otimes w+ w\otimes w\otimes w , \end{multline}
so that $$u_{n-1} \otimes u_{n-1} \otimes u_{n-1}= u_{n-2}\otimes u_{n-2} \otimes u_{n-2} + \mbox{multilinear operators of order} \geq n-1.$$
Applying $A$ to $u_{n-1}$ and $B$ to $u_{n-1}\otimes u_{n-1} \otimes u_{n-1}$, we increase the order of each by one. Hence we have that
\begin{eqnarray} u_n &=& u_0 + Au_{n-2} + Bu_{n-2}\otimes u_{n-2} \otimes u_{n-2}  + \mbox{terms of degree} \geq n \\
&=& u_{n-1}  + \mbox{terms of degree} \geq n . \end{eqnarray}
The result follows from induction.  \end{proof}Given the previous result, we can now define the forward operators. 
\begin{definition} The $n$th term of the forward series, $K_n (\zeta,\ldots, \zeta) $ is defined to be the sum of all multilinear operators of order exactly $n$ in the $nth$ iterate $u_n$. 
\end{definition}

\subsection{General formula for the forward operators}
Using our tensor notation, the forward series is given by iterations of 
$$Tv= u_0 + Av + B v\otimes v\otimes v.$$
Given $u_0$, we have 
\begin{eqnarray}  u_1 &=&  Tu_0 = u_0 + Au_0 + B  u_0  \otimes u_0 \otimes u_0 , \nonumber \\ u_2 &=&  Tu_1 = u_0 + Au_1 + B  u_1  \otimes u_1 \otimes u_1,   \nonumber \\  u_{n+1} &=&  Tu_n = u_0 + Au_n + B  u_n  \otimes u_n \otimes u_n.  \nonumber \end{eqnarray}
Define $U_n$ to be the sum of the first $n$ forward operators, that is,
\begin{eqnarray} U_n &=& \sum_{i=0}^n K_i(\zeta_1,\ldots\zeta_i) \nonumber \\
&=& u_0 + \sum_{i=1}^n K_i(\zeta_1,\ldots,\zeta_i). \nonumber \end{eqnarray}
We know from Lemma \ref{termslem} that 
$$ u_n = U_n + w,$$
where $w$ is a sum of multilinear operators, all of order $> n$.  To find $U_{n+1}$, we use the iteration 
$$ u_{n+1} = u_0 + A(U_n+w)+ B (U_n + w) \otimes (U_n + w) \otimes (U_n+ w) .$$
We know (also from Lemma \ref{termslem} ) that $K_{n+1}$ will be the sum of all terms here which are of order $n+1$. Since $w$ contains only terms of order $\geq n+1$, after applying $A$ or $B$, the result will be of higher order and hence will not be included in $K_{n+1}$.  So any term containing $w$ after expanding out the tensor product can be dropped, and we have that all terms of $K_{n+1}$ will be contained in the sum $$AU_n + B U_n\otimes U_n \otimes U_n.$$ Since $A$ and $B$ each add one to the order, $K_{n+1}$ will consist of $AK_n$ and all terms of the form $$B K_{i_1}\otimes K_{i_2} \otimes K_{i_3} $$
where the ordered triplets $(i_1, i_2, i_3)$ are such that $i_1+i_2+i_3 = n$.  Hence we have derived the following:
\begin{eqnarray} K_0 &=& u_0 ,\nonumber \\   K_1 &=& u_0 + Au_0 + B u_0\otimes u_0\otimes u_0 \nonumber, \\ 
K_{n+1} &=&  AK_n + B\sum_{ \substack{ (i_1, i_2, i_3) \\ i_1+i_2+i_3 =n \\ 0\leq i_1,i_2,i_3 \leq n}} K_{i_1}\otimes K_{i_2}\otimes K_{i_3}. \label{Kformula} \end{eqnarray}
We note that we can count the number of such ordered triples in the above sum to be $$C(n):= n(n+1)/2 + (n+1). $$

 \subsection{Bounds on the forward operators.}
In order to analyze the inverse Born series, we will need bounds for the forward operators $K_i$. 
We will see that to apply existing convergence results about the inverse Born series we need boundedness of the operators as multilinear forms. We use the notation $| \cdot  |_\infty $ to denote the bound on any multilinear operator of order $n$ as follows : 
  \begin{definition} For any multilinear operator $K$ of order $n$ on $[L^\infty(\Omega)]^{2n}$, we define 
  $$ | K |_\infty = \sup_{\substack{ \zeta_1,\ldots,\zeta_n }}  { \| K(\zeta_1,\dots\zeta_n) \| \over{ \| \zeta_1\|\cdots\| \zeta_n\|}} . $$
  \end{definition} Note that, for two multilinear operators $T_1$ and $T_2$ of the same order, we have the triangle inequality
  $$| T_1+T_2|_\infty \leq | T_1 |_\infty + |T_2|_\infty.$$

 \begin{claim} The forward operator $K_n$ given by (\ref{Kformula}) is a bounded multilinear operator from $[L^\infty(\Omega)]^{2n}$ 
 to $C(\partial\Omega\times\partial\Omega)$ and
 \begin{equation}\label{Knbound} | K_n |_\infty  \leq \nu_n \mu^{n} \end{equation}
 where \begin{equation}\label{mudef}  \mu = k^2\sup_{x\in \Omega} \int_\Omega | G(x,y) | dy, \end{equation},
 $$ \nu_0 = \| u_0 \|_{C(\overline{\Omega}\times\partial\Omega )}, $$
 and for all $n\geq 1$,
 \begin{equation} \label{nudef} \nu_{n+1} = \nu_n + \sum_{ \substack{  (i_1,i_2,i_3) \\ i_1+i_2+i_3 =n \\ 0\leq i_1,i_2,i_3 \leq n}} \nu_{i_1}\nu_{i_2}\nu_{i_3} . \end{equation}
 \end{claim}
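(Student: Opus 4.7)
The plan is to prove the bound by induction on $n$ using the recursive formula (\ref{Kformula}) for $K_{n+1}$. The base case is immediate: $K_0 = u_0$ is an order-zero operator with $|K_0|_\infty = \|u_0\|_{C(\overline{\Omega}\times\partial\Omega)} = \nu_0 = \nu_0\mu^0$. For the inductive step I will need two auxiliary observations about how multilinear operators behave under the basic building blocks $A$, $B$, and the tensor product $\otimes$.

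First I would establish an ``operator-to-operator'' bound: for any multilinear $T_l$ of order $l$, the lifted operators $AT_l$ and $BT_l$ from Definition~\ref{def1} satisfy
\begin{equation*}
|AT_l|_\infty \leq \mu\, |T_l|_\infty, \qquad |BT_l|_\infty \leq \mu\, |T_l|_\infty.
\end{equation*}
This follows directly from (\ref{neq})--(\ref{beq}): for fixed $\zeta_1,\ldots,\zeta_l,\zeta_{l+1}$, we pull $\|\alpha_{l+1}\|_\infty$ (respectively $\|\beta_{l+1}\|_\infty$) out of the integral defining $a$ (respectively $b$), bound $|T_l(\zeta_1,\ldots,\zeta_l)|$ pointwise by $|T_l|_\infty \|\zeta_1\|\cdots\|\zeta_l\|$, and recognize the remaining integral $k^2\int_\Omega |G(x,y)|\,dy$ is at most $\mu$ by the definition (\ref{mudef}). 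Second, because the tensor product from Definition~\ref{def2} evaluates its factors on disjoint argument tuples, the multilinear norm is submultiplicative:
\begin{equation*}
|T_l \otimes T_j|_\infty \leq |T_l|_\infty \, |T_j|_\infty.
\end{equation*}

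With these two bounds in hand, the inductive step is a direct calculation. Assume $|K_i|_\infty \leq \nu_i \mu^i$ for every $0 \leq i \leq n$. Applying the triangle inequality to (\ref{Kformula}) and then the two bounds above gives
\begin{equation*}
|K_{n+1}|_\infty \leq \mu\,|K_n|_\infty + \mu \!\!\sum_{\substack{(i_1,i_2,i_3) \\ i_1+i_2+i_3=n}}\!\! |K_{i_1}|_\infty |K_{i_2}|_\infty |K_{i_3}|_\infty \leq \mu^{n+1}\!\left(\nu_n + \!\!\sum_{i_1+i_2+i_3=n}\!\! \nu_{i_1}\nu_{i_2}\nu_{i_3}\right),
\end{equation*}
which is exactly $\nu_{n+1}\mu^{n+1}$ by the definition (\ref{nudef}) of $\nu_{n+1}$, closing the induction. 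The main obstacle is less conceptual than notational: one must carefully keep track of which coordinates of the multilinear argument tuple are being acted upon by $A$, $B$, and the various $K_{i_j}$ factors inside the tensor product, so that the supremum defining $|\cdot|_\infty$ factors cleanly. Once the two auxiliary bounds above are correctly stated and proved for arbitrary multilinear inputs, the recursion for $\nu_n$ matches (\ref{Kformula}) term-for-term and the result follows.
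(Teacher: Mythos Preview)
Your proposal is correct and follows essentially the same approach as the paper: the paper also establishes the auxiliary bounds $|AT_l|_\infty \leq \mu|T_l|_\infty$, $|BT_l|_\infty \leq \mu|T_l|_\infty$, and $|T_l\otimes T_j|_\infty \leq |T_l|_\infty|T_j|_\infty$, then proceeds by induction on $n$ using the recursive formula (\ref{Kformula}) exactly as you describe. Your write-up is in fact slightly more explicit than the paper's about why the auxiliary bounds hold.
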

 \begin{proof} We first note that for our product operators in Definitions \ref{def1} and \ref{def2}, we have that $$ | BT_l |_\infty \leq \mu | T_l |_\infty , $$  $$ | AT_l |_\infty \leq \mu | T_l |_\infty , $$ and
 $$ | T_l \otimes T _j |_\infty \leq | T_l |_\infty | T_j |_\infty. $$
 The proof works then, by induction. The base case clearly holds with trivially with  $| K_0 |_\infty = \nu_0 $.  Assume that for each $i\leq n$, we have
 $$ \| K_i \| \leq \nu_i \mu^i.$$ Using (\ref{Kformula}), we obtain 
\begin{eqnarray}   | K_{n+1} |_\infty &\leq&  |  AK_n |_\infty + | B \sum_{ \substack{ (i_1,i_2,i_3) \\ i_1+i_2+i_3 =n \\ 0\leq i_1,i_2,i_3 \leq n}}  K_{i_1}\otimes K_{i_2}\otimes K_{i_3} |_\infty \nonumber \\ &\leq& \mu | K_n |_\infty + \mu \sum_{ \substack{ (i_1,i_2,i_3)\\ i_1+i_2+i_3 =n \\ 0\leq i_1,i_2,i_3 \leq n}}  | K_{i_1} |_\infty | K_{i_2}|_\infty | K_{i_3} |_\infty \nonumber \end{eqnarray}
which gives, by the inductive hypothesis 
\begin{eqnarray}   | K_{n+1} |_\infty &\leq&  \nu_n \mu^{n+1} + \mu^{n+1} \sum_{ \substack{ (i_1,i_2,i_3) \\ i_1+i_2+i_3 =n \\ 0\leq i_1,i_2,i_3 \leq n}}  \nu_{i_1}\nu_{i_2}\nu_{i_3} \nonumber \\ 
&=& \nu_{n+1}\mu^{n+1}. \nonumber \end{eqnarray} 
 \end{proof}
 \begin{claim} \label{nubound} For the sequence $\{ \nu_n\} $ given by (\ref{nudef}) There exist constants $K$ and $\nu$ (both depending on $\nu_0$ but independent of $n$) such that for any $n\geq 0$, $$ {\nu_n} \leq \nu K^n .$$
 \end{claim}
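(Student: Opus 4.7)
The plan is to extract the exponential growth bound via a generating-function argument based on the analytic implicit function theorem. The key observation is that the recursion (\ref{nudef}) is precisely the coefficient identity for a cubic algebraic equation satisfied by the formal power series $F(z) := \sum_{n\geq 0} \nu_n z^n$. A direct attempt to prove the bound by induction on $n$ fails, since the number of terms in the convolution sum already grows quadratically in $n$, so one cannot close the bootstrap with a naive triangle inequality; the generating function encodes the cancellations correctly.

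First I would translate (\ref{nudef}) into a functional equation. Using the identities $F(z)^3 = \sum_n \big(\sum_{i_1+i_2+i_3 = n} \nu_{i_1}\nu_{i_2}\nu_{i_3}\big) z^n$ and $\sum_n \nu_{n+1} z^n = (F(z) - \nu_0)/z$, the recursion is equivalent to
\begin{equation*}
F(z) = \nu_0 + z F(z) + z F(z)^3.
\end{equation*}
Second, I would apply the analytic implicit function theorem to $H(z,w) := w - \nu_0 - z w - z w^3$: since $H(0,\nu_0) = 0$ and $\partial_w H(0,\nu_0) = 1 \neq 0$, there exist $R > 0$ and an analytic function $\tilde F$ on $\{|z| < R\}$ with $\tilde F(0) = \nu_0$ and $H(z,\tilde F(z)) \equiv 0$.

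Third, I would identify $F$ with $\tilde F$. Expanding $\tilde F$ as a Taylor series and equating coefficients of $z^n$ in the functional equation shows that its Taylor coefficients obey exactly the recursion (\ref{nudef}) with the same initial value, so by induction on $n$ they coincide with $\nu_n$. In particular $F$ has radius of convergence at least $R > 0$, and for any $r < R$ Cauchy's estimate gives $\nu_n \leq r^{-n}\max_{|z|=r}|\tilde F(z)|$. Setting $K := 1/r$ and $\nu := \max_{|z|=r}|\tilde F(z)|$ yields the stated bound, with both constants depending on $\nu_0$ (through $\tilde F$) but independent of $n$.

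The step I expect to be the main obstacle is the identification of the formal series $F$ with the analytic branch $\tilde F$ produced by the implicit function theorem: one must rule out spurious algebraic branches of the cubic. This is handled by noting that $(z,w) = (0,\nu_0)$ is a simple root of $H$ in $w$, so the implicit function theorem provides a unique local analytic solution, and matching power-series coefficients then forces agreement with the recursion. A self-contained alternative, avoiding the implicit function theorem, is to run a Picard iteration $F_{k+1}(z) := \nu_0 + z F_k(z) + z F_k(z)^3$ starting at $F_0 \equiv \nu_0$ on a small disk $\{|z| \leq r\}$, verify that the iterates stay in a closed sup-norm ball and contract there, and apply Cauchy's estimate to the resulting analytic fixed point.
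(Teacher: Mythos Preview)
Your proof is correct and follows the same generating-function strategy as the paper: encode the recursion as a functional equation for $F(z)=\sum_n \nu_n z^n$, show that $F$ is analytic in a neighborhood of $0$, and read off the exponential bound from the positive radius of convergence (the paper phrases the last step as ``the terms $\nu_n x^n$ are bounded'' rather than citing Cauchy's estimate, but this is the same thing). The one substantive difference is in how analyticity is established. You apply the analytic implicit function theorem directly to $H(z,w)=w-\nu_0-zw-zw^3$, observing that $\partial_w H(0,\nu_0)=1\neq 0$. The paper instead remarks that the algebraic relation $xP^3+(x-1)P+\nu_0=0$ is ``singular'' at $x=0$ (because the leading coefficient $x$ vanishes there), differentiates it to obtain the ODE $P'=-(P^3+P)/(3xP^2+x-1)$ with $P(0)=\nu_0$, and invokes the analytic existence theorem for ODEs; it then integrates back to recover the algebraic relation. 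Your route is the more direct of the two: the nonvanishing of $\partial_w H$ at $(0,\nu_0)$ is exactly what the implicit function theorem needs, so the paper's detour through the ODE is not required. Your suggested self-contained alternative via Picard iteration on a small disk is also valid and gives yet another way to reach the same conclusion.
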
 
\begin{proof} To prove this, we consider the generating function 
 $$ P(x) = \sum_{n=0}^\infty \nu_n x^n .$$ 
 We first note that it suffices to prove that this power series has a positive radius of convergence, since if this is the case, then for some positive $x$ the terms $\nu_n x^n 
 \rightarrow 0 $. In particular, they are bounded by some $\nu$, which would imply that $$\nu_n \leq \nu (1/x)^n.$$ 
 We now show that $P(x)$ is analytic in some nontrivial interval around zero. Consider, formally for now, the cube of $P$,
\begin{eqnarray} (P(x))^3 &=& \sum_{ \substack{  i_1,i_2,i_3 = 0,\ldots , \infty } } x^{i_1} x^{i_2} x^{i_3}  \nu_{i_1}\nu_{i_2}\nu_{i_3} \nonumber \\
&=& \sum_{n=0}^\infty f_n x^n \nonumber \end{eqnarray}
where $$ f_n = \sum_{ \substack{ (i_1,i_2,i_3) \\  i_1+i_2+i_3 =n \\ 0\leq i_1,i_2,i_3 \leq n}} \nu_{i_1}\nu_{i_2}\nu_{i_3} ,$$
which is exactly as appears in (\ref{nudef}).
Aow, we multiply  (\ref{nudef}) by $x^n$ and sum to obtain
 $$ \sum_{n=0}^\infty \nu_{n+1} x^n =   \sum_{n=0}^\infty \nu_n x^n + \sum_{n=0}^\infty f_n x^n.$$
 One checks that the left hand side is simply $(P(x) - \nu_0)/x$, and so the above yields
$$ (P(x) - \nu_0)/x = P(x) + (P(x))^3 . $$
So we have
  \begin{equation}\label{polynomial}  x(P(x))^3 + (x-1) P(x) + \nu_0 =0. \end{equation}
  This polynomial in $P$ is singular, so it is not clear that it has an analytic solution at $x=0$. However, if we differentiate with respect to $x$, we obtain
  \begin{equation} \label{ode} P^\prime (x) = - {(P(x))^3+P(x) \over{3x(P(x))^2 + x-1 }} \end{equation}
  with $P(0) = \nu_0$. Since the right hand side is an analytic function of  $x$ and $P$ in a neighborhood of $(0,\nu_0)$, the ode (\ref{ode}) (with initial condition)  has a unique analytic solution in a neighborhood of  $x=0$ (see for example Theorem 4.1 of Teschl \cite{Te}) . Integration of (\ref{ode}) combined with the initial condition implies that this analytic solution satisfies (\ref{polynomial}), and hence its coefficients must satisfy (\ref{nudef}). 
  \end{proof}

\begin{proposition} \label{forwardopbounds} The forward operator $K_n$ given by (\ref{Kformula}) is a bounded multilinear operator from $[L^\infty(\Omega)]^{2n}$ to $C^0(\overline{\Omega}\times\partial\Omega)$, and its bound satisfies 
 \begin{equation}\label{Knbound} | K_n |_\infty  \leq \nu( K \mu)^n ,\end{equation}
 where \begin{equation}\label{mudef}  \mu = k^2\sup_{x\in \Omega} \int_\Omega | G(x,y) | dy \end{equation} and $\nu,K,$ both depending on 
 $ \nu_0 = \| u_0 \|_{C(\overline{\Omega}\times\partial\Omega )}$, are as in Lemma \ref{nubound}.
\end{proposition}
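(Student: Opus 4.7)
The plan is to observe that this proposition is essentially an immediate synthesis of the two preceding lemmas, so the proof will be very short. The first lemma already establishes that $K_n$ is a bounded multilinear operator from $[L^\infty(\Omega)]^{2n}$ to $C(\partial\Omega\times\partial\Omega)$, and more importantly yields the quantitative estimate $|K_n|_\infty \leq \nu_n \mu^n$, where $\nu_n$ is defined recursively by (\ref{nudef}). The second lemma (Lemma \ref{nubound}) then controls the growth of the recursively defined sequence $\{\nu_n\}$, producing constants $\nu$ and $K$ (depending only on $\nu_0 = \|u_0\|_{C(\overline{\Omega}\times\partial\Omega)}$) such that $\nu_n \leq \nu K^n$ for all $n \geq 0$.

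Given these two ingredients, the proof proceeds in one line: for every $n \geq 0$,
\[
|K_n|_\infty \;\leq\; \nu_n \mu^n \;\leq\; \nu K^n \mu^n \;=\; \nu (K\mu)^n,
\]
which is exactly the bound (\ref{Knbound}) claimed in the proposition. The boundedness of $K_n$ as a multilinear map into $C(\overline{\Omega}\times\partial\Omega)$ is inherited directly from the first lemma, together with the continuity of $x \mapsto \int_\Omega G(x,y)\,v(y)\,dy$ that is built into the operators $a$ and $b$ in (\ref{neq})--(\ref{beq}); tensor products and compositions of such bounded, continuous multilinear forms remain bounded and continuous.

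There is no genuine obstacle in this step, since the two lemmas have already done the combinatorial and analytic heavy lifting. If anything, the only thing to double-check is that the constants $\nu$ and $K$ produced in Lemma \ref{nubound} are indeed independent of $n$ and depend only on $\nu_0$; this is already guaranteed by the analytic-solution argument for the ODE (\ref{ode}) in the proof of that lemma, whose radius of convergence around $x=0$ depends only on the initial value $P(0) = \nu_0$. Thus the proof of Proposition \ref{forwardopbounds} is simply the chained inequality displayed above, together with a sentence citing the two lemmas.
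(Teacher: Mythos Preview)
Your proposal is correct and matches the paper's approach: the proposition is stated in the paper without an explicit proof precisely because it is the immediate combination of the two preceding lemmas, and your one-line chained inequality $|K_n|_\infty \leq \nu_n\mu^n \leq \nu K^n\mu^n = \nu(K\mu)^n$ is exactly the intended argument.
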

  \begin{corollary} The forward Born series
  $$ u = u_0 + \sum_{n=1}^\infty K_n (\zeta,\ldots,\zeta) $$
  where $K_n$ are given by (\ref{Kformula}) converges in  $C(\overline{\Omega})$ for 
 $$ \| \zeta\|_\infty \leq  {1\over{K\mu}} $$
 where \begin{equation}\label{mudef}  \mu = k^2\sup_{x\in \Omega} \int_\Omega | G(x,y) | dy, \end{equation} and $\nu,K,$ both depending on 
 $ \nu_0 = \| u_0 \|_{C(\overline{\Omega})}$, are as in Lemma \ref{nubound}.
\end{corollary}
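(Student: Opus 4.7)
The plan is to reduce the convergence of the Born series to the Weierstrass M-test by applying the multilinear operator bound from Proposition \ref{forwardopbounds} to each term. By the very definition of $|\cdot|_\infty$ and the fact that $K_n$ is $n$-linear in its arguments, we have
$$\|K_n(\zeta,\ldots,\zeta)\|_{C(\overline{\Omega})} \;\leq\; |K_n|_\infty \, \|\zeta\|_\infty^{\,n},$$
and then Proposition \ref{forwardopbounds} gives $|K_n|_\infty \leq \nu (K\mu)^n$, so each term is bounded by $\nu (K\mu\|\zeta\|_\infty)^n$.

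Once this majorization is in hand, the rest is a routine geometric-series argument. If $\|\zeta\|_\infty < 1/(K\mu)$, then the ratio $r := K\mu\|\zeta\|_\infty$ lies in $[0,1)$, and the numerical series $\sum_{n=1}^\infty \nu r^n$ converges. This shows that $u_0 + \sum_{n=1}^\infty K_n(\zeta,\ldots,\zeta)$ is absolutely convergent in the Banach space $C(\overline{\Omega})$, and completeness then yields convergence to a continuous limit. No further combinatorics is required, because the delicate growth analysis of the coefficients has already been absorbed into the constants $\nu$ and $K$ supplied by Lemma \ref{nubound}.

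There is no real obstacle here: the entire difficulty of the forward theory has been packed into the recursive bound (\ref{nudef}) and its resolution via the generating-function / ODE argument in Lemma \ref{nubound}. The only minor caveat I would flag is the boundary case $\|\zeta\|_\infty = 1/(K\mu)$ written in the corollary: at equality the geometric majorant just fails to be summable, so the argument above gives convergence only under the strict inequality $\|\zeta\|_\infty < 1/(K\mu)$. This should either be read as a strict inequality or, alternatively, handled with a refined asymptotic for $\nu_n$; in either case, on any closed ball of radius strictly less than $1/(K\mu)$ the convergence is uniform in $\zeta$, which is what is subsequently needed for the inverse Born series analysis.
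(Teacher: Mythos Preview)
Your argument is correct and is exactly the intended one: the paper states the corollary without proof because it follows immediately from Proposition~\ref{forwardopbounds} via the geometric majorant $\|K_n(\zeta,\ldots,\zeta)\|\le \nu(K\mu\|\zeta\|_\infty)^n$ and the Weierstrass M-test, just as you write. Your remark that the inequality should be strict for the geometric series to sum is a valid caveat, though not one the paper addresses.
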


\section{Inverse Born Series}
The inverse problem is to reconstruct the coefficients $\alpha$ and $\beta$ from measurements of the scattering data $\phi$. We proceed by recalling that the Inverse Born series is defined as
\begin{equation}\label{inversedefinition}
    \tilde{\zeta} = \mathcal{K}_1 \phi + \mathcal{K}_2 (\phi)  + \mathcal{K}_3( \phi)  + \cdots \ ,
\end{equation}
where the data $\phi  \in C(\partial\Omega\times\partial\Omega).$ The inverse series was analyzed in \cite{moskow_1} and later studied in \cite{HoSc}. The inverse operator $\mathcal{K}_m$  can be computed from the formulas 
\begin{align}
\label{inv_operators}
\mathcal{K}_1 (\phi) &= K_1^{+} (\phi),\\
\mathcal{K}_2(\phi) &=-\mathcal{K}_1\left(K_2 (\mathcal{K}_1(\phi),\mathcal{K}_1(\phi))\right),\\
\mathcal{K}_m(\phi) &= -\sum_{n=2}^{m}\sum_{i_1+\cdots+i_n = m}  \K_1{K}_n \left( \mathcal{K}_{i_1}(\phi), \dots, \mathcal{K}_{i_n}(\phi) \right) .
\end{align}
Here $K_1^+$ denotes a regularized pseudoinverse of $K_1$, and $\tilde{\zeta}$ is the series sum, an approximation to $\zeta$, when it exists. 
Recall that this inverse series requires forward solves for the background problem only (i.e., applying the forward series operators), and requires a pseudoinverse and regularization of the first linear operator only; $ \mathcal{K}_1 = K_1^+$. 
The bounds on the forward operators from Proposition \ref{forwardopbounds} combined with Theorem 2.2 of \cite{HoSc}  yield the following result.

\begin{theorem}
If   $\Vert \mathcal{K}_1 \phi\Vert < r $, where 
where the radius of convergence $r$ is given by
$$
r=\frac{1}{2K\mu} \left[\sqrt{16 C^2+1}-4 C \right],
$$
where $C = \max\{2,\|\mathcal{K}_1\|\nu K\mu\}.$
\begin{equation}  \mu = k^2\sup_{x\in \Omega} \int_\Omega | G(x,y) | dy, \end{equation} and $\nu,K,$ both depending on 
 $ \nu_0 = \| u_0 \|_{C^0(\Omega\times\partial\Omega )}$, are as in Lemma \ref{nubound}, then the inverse Born series converges. 
\end{theorem}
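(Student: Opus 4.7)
The plan is to derive the theorem as a direct application of Theorem 2.2 of \cite{HoSc}, using the geometric bound on the forward operators established in Proposition \ref{forwardopbounds} as the only non-trivial input. The abstract convergence theorem in \cite{HoSc} takes as hypothesis a family of multilinear forward operators $K_n$ satisfying $|K_n|_\infty \leq \nu \rho^n$ for some $\nu,\rho>0$, together with a pseudoinverse $\mathcal{K}_1 = K_1^+$ of the first-order operator, and concludes that the inverse series defined by (\ref{inv_operators}) converges for data whose pseudoinverse lies in a ball of explicit radius. Our task is therefore to match notations and read off the radius.

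Concretely, I would first fix $\nu_0 = \|u_0\|_{C(\overline{\Omega}\times\partial\Omega)}$ and invoke Lemma \ref{nubound} together with Proposition \ref{forwardopbounds} to obtain the bound $|K_n|_\infty \leq \nu (K\mu)^n$ with $\rho = K\mu$. Next I would apply the abstract theorem with this choice of $\nu$ and $\rho$; the quantity $C$ appearing in the conclusion of \cite{HoSc} is precisely $\max\{2,\|\mathcal{K}_1\|\nu\rho\} = \max\{2,\|\mathcal{K}_1\|\nu K\mu\}$, and the abstract radius of convergence takes the form
\[
r \;=\; \frac{1}{2\rho}\!\left[\sqrt{16C^2+1}-4C\right],
\]
which, after substituting $\rho = K\mu$, is exactly the radius stated in the theorem. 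The smallness condition $\|\mathcal{K}_1\phi\| < r$ is then inherited verbatim.

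The only point requiring real attention is whether Theorem 2.2 of \cite{HoSc} is genuinely formulated for (or can be applied to) the cubic-nonlinearity setting here: in the linear Born series, the recursion for $K_n$ is quadratic (binary tensor products), whereas the recursion (\ref{Kformula}) uses a ternary tensor product tied to $|u|^2 u$. What makes the argument go through is that \cite{HoSc}'s proof depends only on the multilinear structure of the $K_n$ and on the geometric growth of $|K_n|_\infty$, not on the specific combinatorics of how the $K_n$ are built from $K_1,\ldots,K_{n-1}$. I would therefore briefly check that \cite{HoSc}'s hypotheses are purely quantitative (bound plus multilinearity) and then record that our Proposition \ref{forwardopbounds} furnishes exactly the required bound, completing the proof.
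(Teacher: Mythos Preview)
Your proposal is correct and follows exactly the paper's approach: the paper simply states that the theorem follows from combining the forward operator bounds of Proposition~\ref{forwardopbounds} with Theorem~2.2 of \cite{HoSc}, and you have spelled out precisely how that combination works by identifying $\rho = K\mu$ and reading off the resulting radius. Your additional remark checking that the abstract result in \cite{HoSc} depends only on the geometric bound $|K_n|_\infty \le \nu\rho^n$ and multilinearity, rather than on the binary recursion of the linear Born series, is a welcome point of rigor that the paper leaves implicit.
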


\section{Numerical Experiments}
In this section we present several numerical experiments using the inverse Born series (\ref{inversedefinition}) to reconstruct $\alpha$ and $\beta$ from synthetic data. In all cases, used the FEniCS PDE solver library in Python to create the synthetic data $\phi$. To implement the inverse series we also need the forward operators; for these we use the recursive formula, implemented as in Algorithm \ref{forwardalg}. 
\begin{algorithm}

    \Fn{compute-K$(n, \alpha, \beta)$}{
        \If{$n = 0$}{
            \Return $u_0$\;
        }
        
        $v_\alpha$ :=  find-K$(n-1, \alpha(n), \beta(n))$\;
        
        $v_\beta$ := 0\;
        \For{$i_1$ = 0 to $n-1$}{
            \For{$i_2$ = 0 to $n-i_1-1$}{
                $i_3 := n - i_2 - i_1 - 1$\;
                $K_{i_1} := $compute-K$(i_1, \alpha(1:i_1), \beta(1:i_1)$\;
                $K_{i_2} := $compute-K$(i_2, \alpha(i_1+1:i_1+i_2), \beta(i_1+1:i_1+i_2))$\;
                $K_{i_3} := $compute-K$(i_3, \alpha(i_1+i_2+1:n-1), \beta(i_1+i_2+1:n-1))$\;
                $v_\beta := v_\beta + K_{i_1} \cdot K_{i_2} \cdot K_{i_3}$\;
            }
        }
        \Return $A(v_\alpha, \alpha(n)) + B(v_\beta, \beta(n))$\;
        
    }
    \vskip .3in
        \caption{Generation of the terms in the forward series.}   
       \label{forwardalg}  \end{algorithm}
        
We implement the application of the operators $A$ and $B$ by solving a background PDE (equivalent to integrating against background Green's function kernel), again using the FEniCS PDE solver library in Python; taking care to choose a different FEM mesh from those used for the generation of the synthetic data. The inverse Born series implementation is the same as in previous work, see for example \cite{}, here calling on the above Algorithm \ref{forwardalg} to call the forward operators.

We begin with an experiment in one dimension, on the interval $\Omega = [0, 1]$.
Here, we have only two points on the boundary, meaning we can only take samples at these two points. However, thanks to the nonlinearity, a scaled source has the potential to yield more information.  For example, if we scale the source and take the right linear combination of the two solutions, we eliminate $\alpha$. While we don't implement this explicitly, we do capitalize on scaling for the one dimensional example, where we found it improved the reconstruction.

In the following example, we used $12$ scaled sources on each side of the interval and three different frequencies $k=0.9,1,1.1$, for a total of $72$ sources.
We chose the reference functions $\alpha$ and $\beta$ to be
\[
    \alpha, \beta = \begin{cases}
        \frac{\gamma}{\sqrt{2\pi \epsilon}}e^{\frac{-x^2}{2\epsilon }} & \text{if } 0.4 \leq x \leq 0.6\\
        0&\text{otherwise}
    \end{cases}
\]
for $\epsilon=0.01$ and $\gamma=0.2$. We see their simultaneous reconstruction in Figure \ref{fig:1d-sources}.
\begin{figure}[H] \label{fig:1d-sources}
\centering
\includegraphics[height=2in]{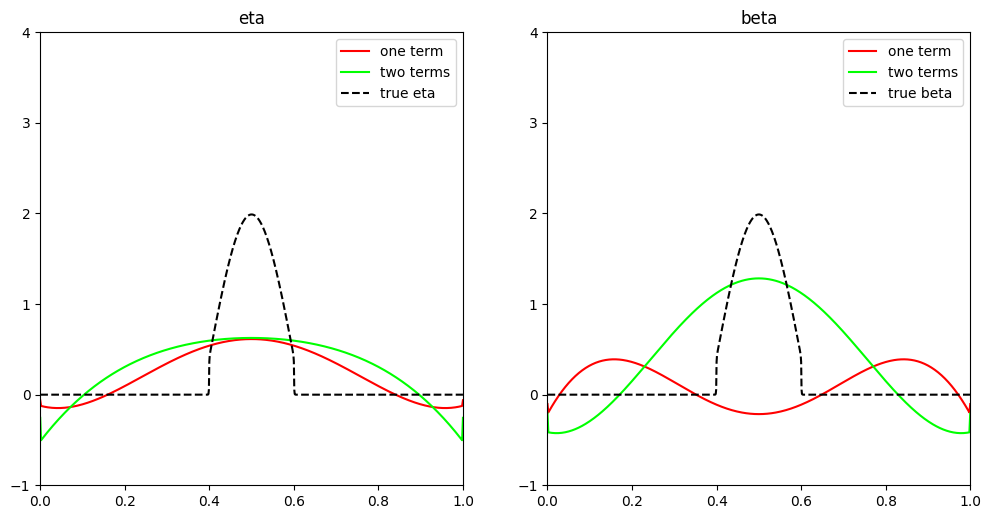}
\caption{One dimensional simultaneous reconstruction of $\alpha$ and $\beta$ using varying source strength and frequency}
\label{fig:1d-sources}
\end{figure}
Next we run several experiments in two dimensions, all with domain  $\Omega$ the unit disk.
First, we compare the reconstruction of $\alpha$ with $\beta = 0$  (traditional linear problem), to the reconstruction of $\beta$ with $\alpha=0$.  In both cases we let the unknown be a piecewise constant. For the first example, we chose the moderate contrast medium (\ref{refmedium1}),  and we see reconstruction results in Figure \ref{fig:low}. Next, we increase the contrast by a factor of $4$ in (\ref{refmedium1}) and we see results in Figures \ref{fig:medium} and \ref{fig:medium-cross}. Finally, we choose a very high contrast,  the medium (\ref{refmedium1}) multiplied by a factor of $16$. In Figures \ref{fig:high} and \ref{fig:high-cross}, we see that the method does not produce good results for $\alpha$, but is not as bad for $\beta$.

\begin{equation} \label{refmedium1}
    \beta,\alpha = \begin{cases}
        1 & \text{if } (x-0.3)^2 + y^2 \leq 0.2\\
        0 & \text{otherwise}
    \end{cases}
\end{equation}

\begin{figure}[H]
    \centering
    \begin{subfigure}{1\textwidth}
        \centering
        \includegraphics[height=2in]{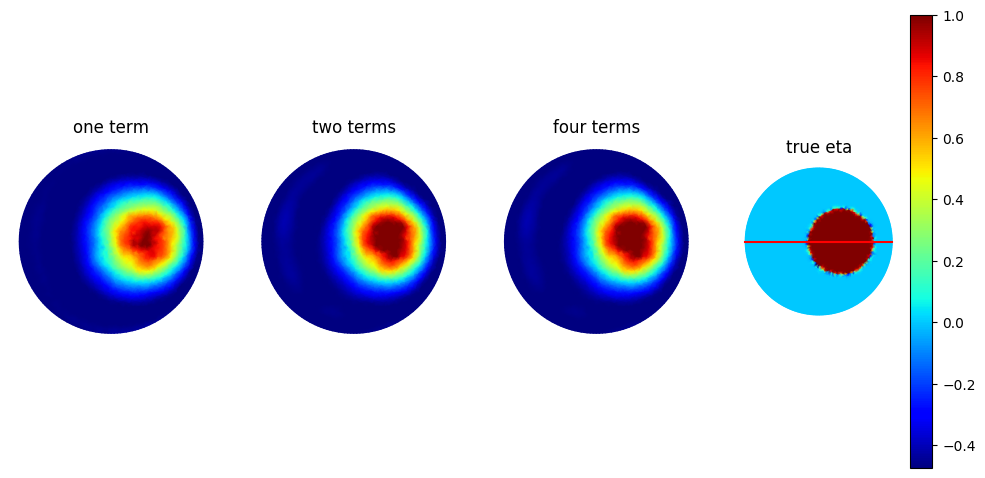}
        \caption{$\alpha$ reconstruction with $\beta = 0$}
        \label{fig:sub1}
    \end{subfigure}
    \begin{subfigure}{1\textwidth}
        \centering
        \includegraphics[height=2in]{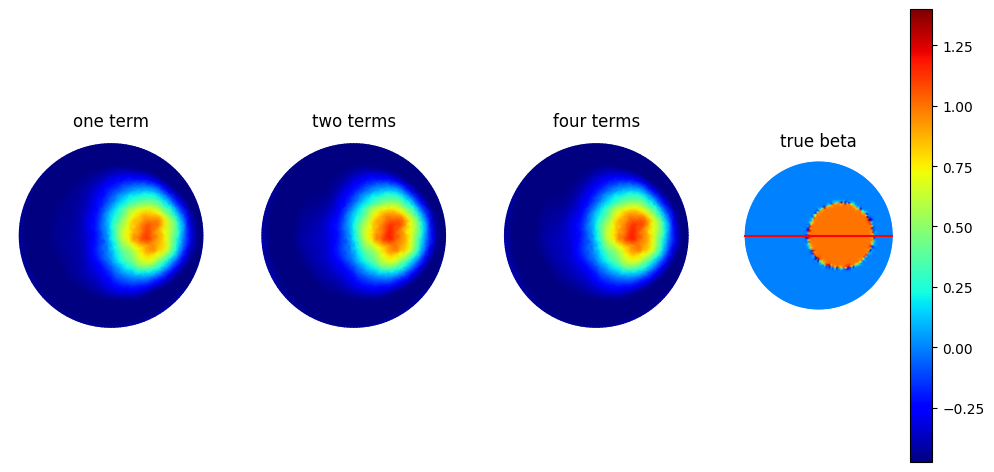}
        \caption{$\beta$ reconstruction with $\alpha = 0$}
        \label{fig:sub2}
    \end{subfigure}
    \caption{Independent reconstruction only for low contrast}
    \label{fig:low}
\end{figure}

\begin{figure}[H]
    \centering
    \begin{subfigure}{.5\textwidth}
        \centering
        \includegraphics[height=1.5in]{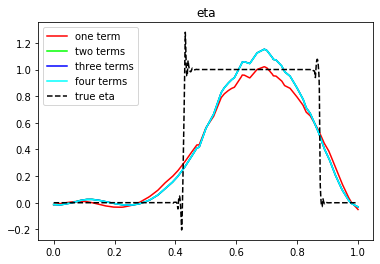}
        \caption{$\alpha$ reconstruction with $\beta = 0$}
        \label{fig:sub1}
    \end{subfigure}%
    \begin{subfigure}{.5\textwidth}
        \centering
        \includegraphics[height=1.5in]{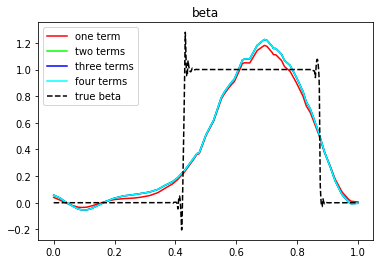}
        \caption{$\beta$ reconstruction with $\alpha = 0$}
        \label{fig:sub2}
    \end{subfigure}
    
    \caption{Cross-section of independent reconstruction for low contrast}
    \label{fig:low-cross}
\end{figure}
\begin{figure}[H]
    \centering
    \begin{subfigure}{1\textwidth}
        \centering
        \includegraphics[height=2in]{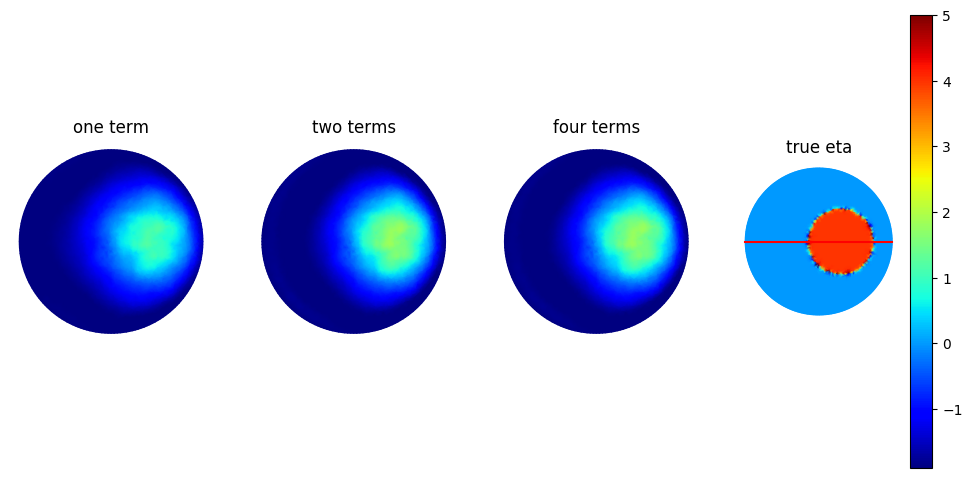}
        \caption{$\alpha$ reconstruction with $\beta = 0$}
        \label{fig:sub1}
    \end{subfigure}
    \begin{subfigure}{1\textwidth}
        \centering
        \includegraphics[height=2in]{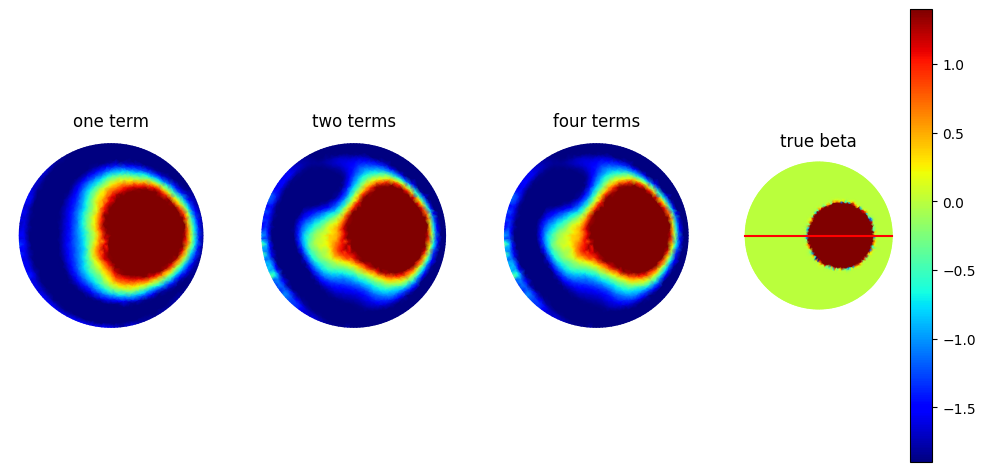}
        \caption{$\beta$ reconstruction with $\alpha = 0$}
        \label{fig:sub2}
    \end{subfigure}
    \caption{Independent reconstruction for medium contrast}
    \label{fig:medium}
\end{figure}

\begin{figure}[H]
    \centering
    \begin{subfigure}{.5\textwidth}
        \centering
        \includegraphics[height=1.5in]{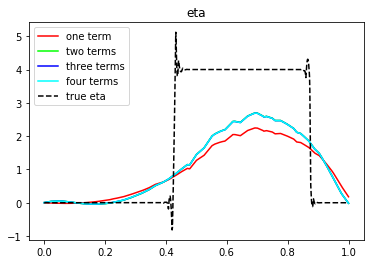}
        \caption{$\alpha$ reconstruction with $\beta = 0$}
        \label{fig:sub1}
    \end{subfigure}%
    \begin{subfigure}{.5\textwidth}
        \centering
        \includegraphics[height=1.5in]{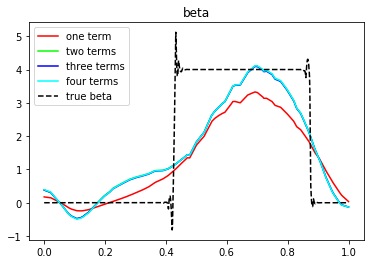}
        \caption{$\beta$ reconstruction with $\alpha = 0$}
        \label{fig:sub2}
    \end{subfigure}
    \caption{Cross-section of independent reconstruction only for medium contrast}
    \label{fig:medium-cross}
\end{figure}
\begin{figure}[H]
    \centering
    \begin{subfigure}{1\textwidth}
        \centering
        \includegraphics[height=2in]{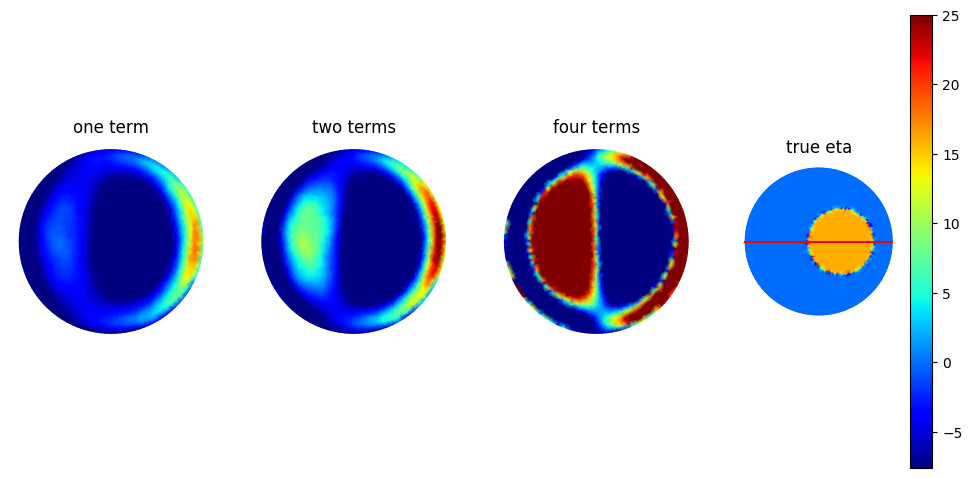}
        \caption{$\alpha$ reconstruction with $\beta = 0$}
        \label{fig:sub1}
    \end{subfigure}
    \begin{subfigure}{\textwidth}
        \centering
        \includegraphics[height=2in]{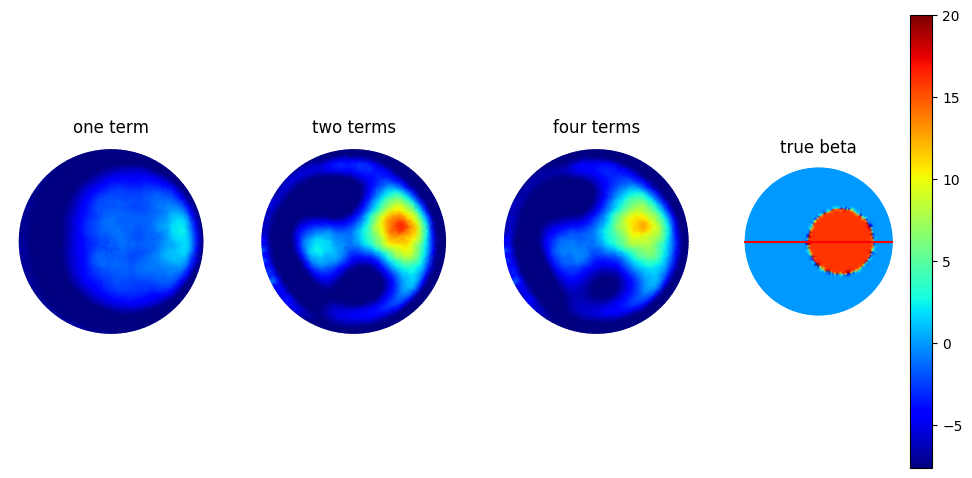}
        \caption{$\beta$ reconstruction with $\alpha = 0$}
        \label{fig:sub2}
    \end{subfigure}
    \caption{Independent reconstruction for high contrast}
    \label{fig:high}
\end{figure}

\begin{figure}[H]
    \centering
    \begin{subfigure}{.5\textwidth}
        \centering
        \includegraphics[height=1.5in]{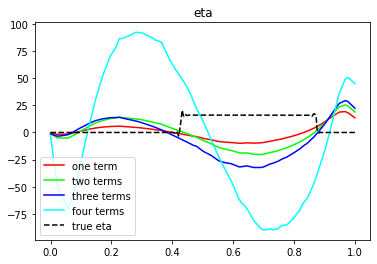}
        \caption{$\alpha$ reconstruction with $\beta = 0$}
        \label{fig:sub1}
    \end{subfigure}%
    \begin{subfigure}{.5\textwidth}
        \centering
        \includegraphics[height=1.5in]{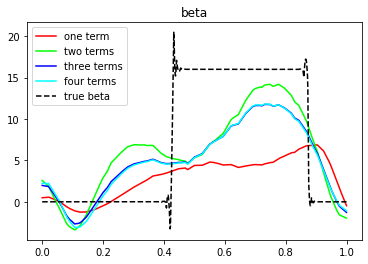}
        \caption{$\beta$ reconstruction with $\alpha = 0$}
        \label{fig:sub2}
    \end{subfigure}
    \caption{Cross-section of independent reconstruction for high contrast}
    \label{fig:high-cross}
\end{figure}

In this next set of examples, we consider the simultaneous reconstruction of $\beta$ and $\alpha$.  In the first example, we take
\begin{equation}\label{medium2}
    \beta = \alpha =  \frac{2}{\sqrt{2\pi\epsilon}}\exp{\left(\frac{-|x-x_0|^2}{2\epsilon}\right)}
\end{equation}
for $x_0=( -.3,3)$ and $\epsilon=0.04$. We see results in Figures \ref{fig:both-medium} and \ref{fig:both-medium-cross}. In the second example, we raise the contrast in (\ref{medium2}) by a factor of $4$ ,and we see that we still get reasonable reconstructions in Figure \ref{fig:both-high} and Figure \ref{fig:both-high-cross}.
\begin{figure}[H]
    \centering
    \includegraphics[height=2in]{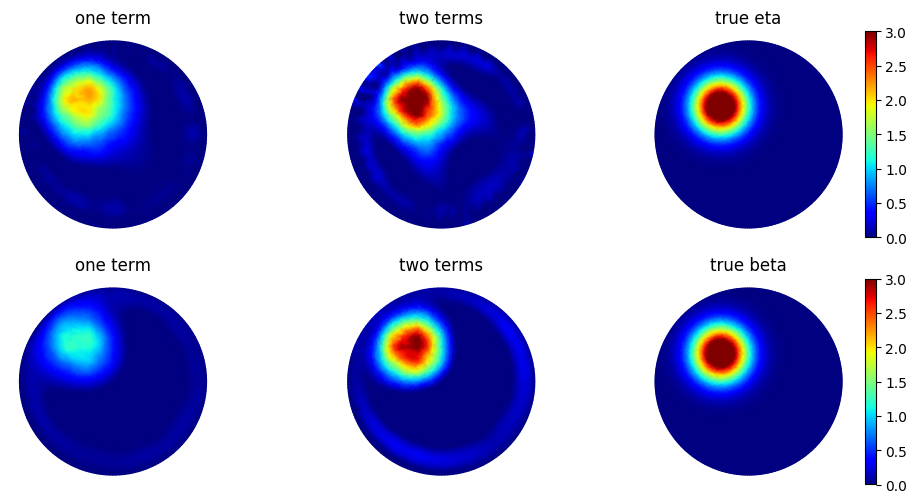}
    \caption{Simultaneous $\alpha$ and $\beta$ reconstruction for high contrast}
    \label{fig:both-medium}
\end{figure}

\begin{figure}[H]
    \centering
    \includegraphics[height=2in]{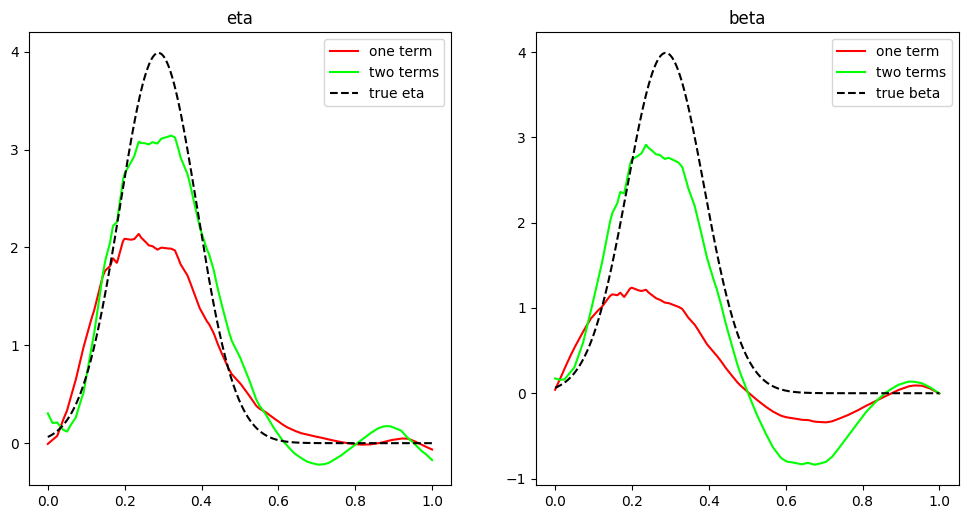}
    \caption{Cross-section $\alpha$ and $\beta$ reconstruction for high contrast}
    \label{fig:both-medium-cross}
\end{figure}
\begin{figure}[H]
    \centering
    \includegraphics[height=2in]{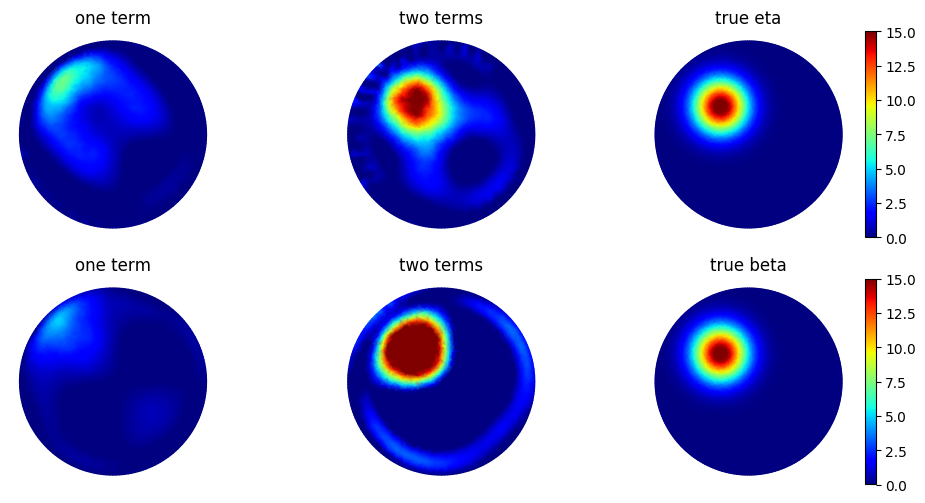}
    \caption{Simultaneous $\alpha$ and $\beta$ reconstruction for very high contrast}
    \label{fig:both-high}
\end{figure}

\begin{figure}[H]
    \centering
    \includegraphics[height=2in]{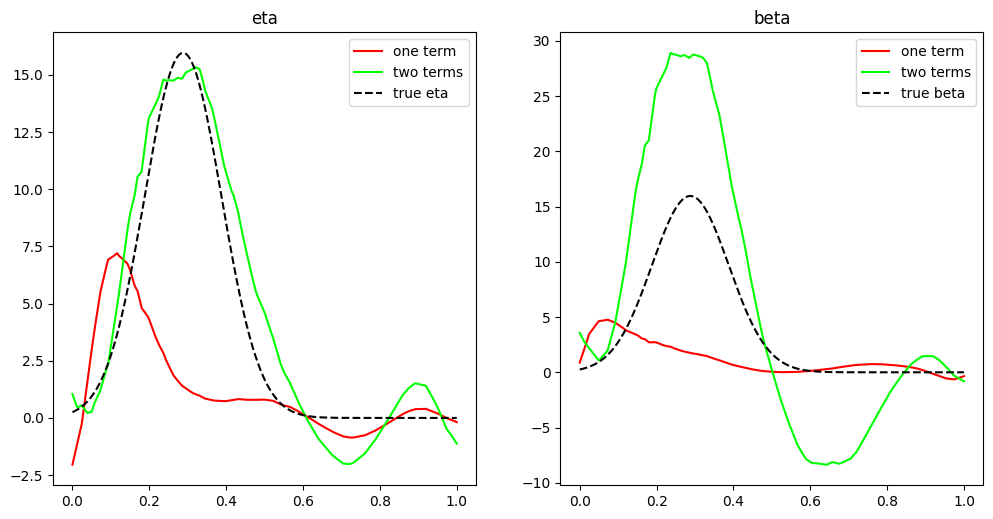}
    \caption{Cross-section $\alpha$ and $\beta$ reconstruction for very high contrast}
    \label{fig:both-high-cross}
\end{figure}

\section{Discussion}
 We have considered the Born and inverse Born series for scalar waves with a cubic nonlinearity of Kerr type. We found a recursive formula for the forward operators in the Born series. This result gives conditions which guarantee convergence of the Born series, and also leads to conditions which guarantee convergence of the inverse Born series. Our results are illustrated results with numerical experiments.

The ideas developed here provide a framework for studying inverse problems for a wide class of nonlinear PDEs with polynomial nonlinearities. The formulas and algorithm for generating the forward operators, the use of the generating functions, and the resulting reconstruction algorithm are readily generalizable to this setting and will be explored in future work.

\section{Acknowledgments}  
The authors are indebted to Jonah Blasiak and R. Andrew Hicks for discussions essential to the proof of Lemma \ref{nubound}. S. Moskow was partially supported by the NSF grant DMS-2008441. J. Schotland was supported in part by the NSF grant DMS-1912821 and the AFOSR grant FA9550-19-1-0320.

\appendix
\section{Proof of Proposition~1} \label{fixedpointappendix}
In this appendix we obtain conditions for existence of a unique solution to (\ref{integralequation}) and give alternative conditions  on $\alpha$ and $\beta$ that guarantee convergence of the Born series.  
Define the linear operator $$ G: C^0 (\overline{\Omega})\rightarrow C^0(\overline{\Omega}) $$ by $$ G(v) = -k^2\int_\Omega G(x, y) v(y)\,dy. $$ Then, for $u_0$ in $C^0(\overline{\Omega})$, we have that $T$ can be written as 
\begin{equation}
	T(v) = u_0 +G\left(\alpha v + \beta v^3\right).
\end{equation}
Note that $G$ is compact and bounded.  Define $\mu$ by 
\begin{equation}\label{mudef1}  \mu = k^2\sup_{x\in \Omega} \int_\Omega | G(x,y) | dy, \end{equation}
Then we have that $$ \| G(v) \| \leq \mu\Vert v \Vert$$ for all $v\in C^0(\overline{\Omega})$. 	
We will make use of the following two lemmas. The first gives conditions to have a contraction. 
\begin{claim}\label{lipbound}
For any $f,g\in C^0(\overline{\Omega})$ such that $\| f\|,\| g\| \leq R$, we have 
$$ \| T(f)-T(g) \| \leq q \| f-g\|$$
where $$q = \mu (\left\Vert\alpha\right\Vert + 3R^2\left\Vert\beta\right\Vert) $$ 
for  $\mu$  defined by (\ref{mudef1}).
\end{claim}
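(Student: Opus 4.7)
The plan is to use the structure $T(v) = u_0 + G(\alpha v + \beta v^3)$ directly, so that the constant term $u_0$ cancels in the difference and we are left with
\[ T(f) - T(g) = G\!\left(\alpha(f-g) + \beta(f^3 - g^3)\right). \]
Since $G$ is linear and satisfies $\|G(v)\| \leq \mu \|v\|$ by the definition of $\mu$, applying this bound together with the triangle inequality in $C^0(\overline{\Omega})$ reduces the problem to controlling $\|\alpha(f-g)\|$ and $\|\beta(f^3 - g^3)\|$ in the sup norm.

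The first term is immediate: $\|\alpha(f-g)\| \leq \|\alpha\|\,\|f-g\|$. For the cubic term, I would use the algebraic identity
\[ f^3 - g^3 = (f-g)\,(f^2 + fg + g^2), \]
which is valid pointwise. Taking sup norms and applying the bounds $\|f\|, \|g\| \leq R$ to each of the three quadratic monomials gives $\|f^2 + fg + g^2\| \leq 3R^2$, and hence
\[ \|\beta(f^3 - g^3)\| \leq \|\beta\|\,\|f^2 + fg + g^2\|\,\|f - g\| \leq 3R^2 \|\beta\|\,\|f - g\|. \]
Combining these two estimates and pulling out the common factor $\|f-g\|$ then yields
\[ \|T(f) - T(g)\| \leq \mu\bigl(\|\alpha\| + 3R^2\|\beta\|\bigr)\,\|f - g\| = q\,\|f-g\|, \]
which is exactly the claim.

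There is no real obstacle here; the proof is a direct computation. The only step requiring any care is the factorization of $f^3 - g^3$ and the observation that each of the three terms $f^2$, $fg$, $g^2$ is bounded by $R^2$ in sup norm, which is what produces the factor of $3$ in the constant. Everything else follows from linearity of $G$, the definition of $\mu$, and the sup-norm submultiplicativity $\|hv\| \leq \|h\|\,\|v\|$.
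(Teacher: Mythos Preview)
Your proposal is correct and follows essentially the same argument as the paper: cancel $u_0$, apply the bound $\|G(v)\|\le\mu\|v\|$, factor $f^3-g^3=(f-g)(f^2+fg+g^2)$, and estimate the quadratic factor by $3R^2$. The only cosmetic difference is that the paper factors out $(f-g)$ before splitting via the triangle inequality, whereas you split first and then factor; the steps and the resulting constant are identical.
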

\begin{proof} 
Let $f, g \in B$. Then,  
		 \begin{equation}
		 	T(f)- T(g)) = 
		  k^2\int_{\Omega}G(x, y)\left[\alpha(y)\left(f(y) - g(y)\right) + \beta(y)\left(f(y)^3 - g(y)^3\right)\right]\,dy \end{equation}
		  so that 
		  \begin{multline} 
		  \| T(f)-T(g)\| 
		 	\leq \mu\left\Vert \alpha\left(f - g\right) + \beta\left(f^3 - g^3\right)\right\Vert\\
		 	= \mu\left\Vert\left[\alpha + \beta\left(f^2 + fg + g^2\right)\right](f - g)\right\Vert\\
		 	\leq \mu\left(\left\Vert\alpha\right\Vert + \left\Vert\beta\left(f^2 + fg + g^2\right)\right\Vert\right)\cdot\left\Vert(f - g)\right\Vert\\	 		 	\leq \mu\left(\left\Vert\alpha\right\Vert + 3R^2\left\Vert\beta\right\Vert\right)\cdot\Vert f - g \Vert. 
		 \end{multline}
\end{proof}
The second lemma gives us a ball which $T$ maps into itself. 
\begin{claim}\label{mapping}
Let $r>0$ be given, and let $B=B(u_0, r)$ be the ball of radius $r$ about $u_0$ in $C^0(\Omega)$.  Define $R=\Vert u_0 \Vert + r$. Then if  $$\mu R\left(\Vert \alpha \Vert + R^2\Vert\beta\Vert\right) < r,$$ we have that $T(v) \in B$ for any $v\in B$, and hence $T$ has a fixed point in $B$. 
\end{claim}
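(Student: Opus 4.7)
The plan is to prove the two claims of the lemma in order: first that $T$ maps the ball $B$ into itself under the stated smallness condition, and then that this plus compactness yields a fixed point via Schauder's theorem.

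First I would take an arbitrary $v \in B = B(u_0, r)$ and note the a priori bound $\|v\| \le \|v - u_0\| + \|u_0\| \le r + \|u_0\| = R$. Using the expression $T(v) = u_0 + G(\alpha v + \beta v^3)$, I would then estimate
\begin{equation*}
\|T(v) - u_0\| = \|G(\alpha v + \beta v^3)\| \le \mu\, \|\alpha v + \beta v^3\|,
\end{equation*}
where the last inequality uses the bound $\|G(w)\| \le \mu \|w\|$ established just before the lemma. Splitting the argument of $G$ and using $\|v\| \le R$,
\begin{equation*}
\|\alpha v + \beta v^3\| \le \|\alpha\|\,\|v\| + \|\beta\|\,\|v\|^3 \le R\bigl(\|\alpha\| + R^2\|\beta\|\bigr).
\end{equation*}
Combining these yields $\|T(v) - u_0\| \le \mu R(\|\alpha\| + R^2\|\beta\|) < r$ by hypothesis, so $T(v) \in B$. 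This proves the first assertion.

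For the fixed point, I would invoke Schauder's fixed point theorem. The ball $B$ is a closed, bounded, convex subset of the Banach space $C^0(\overline{\Omega})$. The map $v \mapsto \alpha v + \beta v^3$ is continuous from $C^0(\overline{\Omega})$ to itself, and $G$ is compact (as stated in the appendix just before the lemma), so the composition $T = u_0 + G \circ (\alpha \cdot + \beta (\cdot)^3)$ is a continuous compact operator. Since $T$ maps $B$ into itself by the estimate just derived, Schauder's fixed point theorem furnishes a $u \in B$ with $T(u) = u$.

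The argument is essentially a routine application of the triangle inequality plus a standard fixed point theorem, so there is no substantive obstacle; the only point requiring care is recognizing that compactness (rather than contractivity) is what the hypothesis provides in this lemma, leaving uniqueness to be handled separately by Lemma~\ref{lipbound} in the overall proof of Proposition~\ref{bounds}.
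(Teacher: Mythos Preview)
Your proof of the self-mapping part is exactly the paper's argument: bound $\|v\|\le R$ by the triangle inequality, use $\|G(w)\|\le\mu\|w\|$, and split $\|\alpha v+\beta v^3\|\le R\|\alpha\|+R^3\|\beta\|$ to get $\|T(v)-u_0\|<r$.

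The one difference is that the paper's proof of this lemma stops there and does not actually justify the fixed-point clause at all; it simply asserts it, presumably relying on the earlier remark that $G$ is compact (and implicitly on Schauder), or else deferring existence to the subsequent contraction Lemma~\ref{contraction}. You make the Schauder step explicit, which is a legitimate and self-contained way to close the gap. Your observation that this lemma alone gives existence but not uniqueness, with uniqueness handled separately via the contraction estimate, is also accurate.
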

	\begin{proof}
Assume that $v \in B$. Then, by the triangle inequality, we have that $\Vert v \Vert \leq R$. Then, 
		\begin{eqnarray}
			\| T(v)-u_0\|  &=&  \| G(\alpha v+\beta v^3) \|  \\
			&\leq& \mu\left\Vert\alpha v + \beta v^3\right\Vert\\
			&\leq& \mu\left(R\Vert\alpha\Vert + R^3\Vert \beta \Vert\right).
		\end{eqnarray}
By hypothesis, this is less than $r$, and hence $T(v) \in B$.
	\end{proof}
\begin{claim}\label{contraction}
		If the hypotheses of Lemma \ref{mapping} hold and, additionally,  $$\mu\left(\Vert\alpha\Vert + 3R^2\Vert\beta\Vert\right) < 1,$$ then $T$ is a contraction mapping on $B$.
	\end{claim}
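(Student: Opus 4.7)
The plan is to combine Lemma \ref{lipbound} and Lemma \ref{mapping} directly. Under the hypotheses of Lemma \ref{mapping}, the ball $B = B(u_0, r)$ is mapped into itself by $T$, so $T$ is a well-defined self-map of $B$. Any $v \in B$ satisfies $\|v\| \leq \|u_0\| + r = R$ by the triangle inequality, so Lemma \ref{lipbound} applies with this value of $R$ to any pair $f, g \in B$.

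Explicitly, for arbitrary $f, g \in B$, I would invoke Lemma \ref{lipbound} to conclude
\begin{equation*}
\|T(f) - T(g)\| \leq q \|f - g\|, \qquad q = \mu\bigl(\|\alpha\| + 3R^2 \|\beta\|\bigr).
\end{equation*}
The additional hypothesis $\mu(\|\alpha\| + 3R^2 \|\beta\|) < 1$ is exactly the statement $q < 1$, so $T$ is a strict contraction on $B$.

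There is no real obstacle here — the lemma is essentially a bookkeeping corollary of the two preceding lemmas. The only thing to verify is that the choice of $R$ in Lemma \ref{lipbound} is consistent with the $R$ appearing in Lemma \ref{mapping}, which it is by construction. Once this is observed, the Banach fixed point theorem can be applied on $B$ (a closed ball in the complete space $C^0(\overline{\Omega})$) to yield the unique fixed point claimed in Proposition \ref{bounds}, which is the ultimate purpose of this appendix lemma.
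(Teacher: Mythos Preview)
Your proposal is correct and follows essentially the same approach as the paper: invoke Lemma~\ref{mapping} for the self-map property, then apply Lemma~\ref{lipbound} with the bound $R = \|u_0\| + r$ to obtain the Lipschitz constant $q = \mu(\|\alpha\| + 3R^2\|\beta\|) < 1$. If anything, your version is slightly more explicit in checking that the $R$'s in the two lemmas agree.
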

	\begin{proof}
Since $\mu\left\Vert\alpha\right\Vert + 3R^2\left\Vert\beta\right\Vert < 1$ by assumption, by Lemma \ref{lipbound} we have that $d(T(f), T(g)) \leq q\cdot d(f, g)$ where $q<1$.  
Since $T$ also maps $B$ into itself, $T$ is a contraction mapping on $B$. \end{proof}
Clearly if Lemma \ref{contraction} holds,  by the Banach fixed point theorem $T$ will have a unique fixed point on $B$.  Furthermore, if we start with initial function $u_0$ in $B$, then fixed-point iteration will converge to the unique fixed point, which in this case is the solution to the integral  of the partial differential equation defined by  (\ref{integralequation}).  The iterates of the fixed point iteration will generate the (forward) Born series. 

\begin{remark}  We note that If $\beta$ and $\alpha$ satisfy the (more restrictive) condition $$\mu(\Vert \alpha\Vert + 3R^2\Vert\beta\Vert) < \frac{r}{R},$$ then both Lemma \ref{mapping} and Lemma \ref{contraction} are also satisfied.
\end{remark}
The following is well known for the linear case, see for example \cite{CoKr}.
\begin{proposition}If $\beta=0$ and $\Vert\alpha\Vert < \frac{1}{\mu}$, then $T$ has a unique fixed point on all of $C^0(\Omega)$.	\end{proposition}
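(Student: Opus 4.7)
The plan is to reduce to the linear case and apply the Banach fixed point theorem globally. With $\beta=0$, the map simplifies to
\begin{equation}
T(v) = u_0 + G(\alpha v),
\end{equation}
which is affine in $v$. The nonlinear cubic term was the only obstruction that forced us earlier to restrict to a bounded ball (since the Lipschitz constant from Lemma \ref{lipbound} contains a $3R^2\|\beta\|$ piece that blows up with the radius $R$).

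First I would observe that inspecting the calculation in the proof of Lemma \ref{lipbound} with $\beta=0$ gives, for any $f,g\in C^0(\overline{\Omega})$ with no size restriction,
\begin{equation}
\|T(f)-T(g)\| = \|G(\alpha(f-g))\| \le \mu\|\alpha\|\,\|f-g\|.
\end{equation}
So the Lipschitz constant is $\mu\|\alpha\|$, which is strictly less than $1$ by hypothesis. In particular, $T$ is a (global) contraction on the complete metric space $C^0(\overline{\Omega})$.

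Next I would apply the Banach fixed point theorem directly on all of $C^0(\overline{\Omega})$ to conclude that $T$ has a unique fixed point. Equivalently, one can write this as $(I-G\circ M_\alpha)v = u_0$, where $M_\alpha$ denotes multiplication by $\alpha$; the operator $G\circ M_\alpha$ has norm at most $\mu\|\alpha\|<1$ on $C^0(\overline{\Omega})$, so $I-G\circ M_\alpha$ is invertible by the Neumann series, yielding the explicit fixed point $v=\sum_{n=0}^\infty (G\circ M_\alpha)^n u_0$, which is precisely the (linear) Born series.

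There is no real obstacle here; the only thing to notice is that the smallness condition on $\alpha$ is independent of the radius, so no restriction to a ball is needed and uniqueness is automatic on the whole space. This matches the standard Lippmann–Schwinger theory cited as \cite{CoKr}.
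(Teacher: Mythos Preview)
Your argument is correct. It is, in fact, a cleaner route than the one taken in the paper. The paper argues via Lemmas~\ref{mapping} and~\ref{contraction}: with $\beta=0$ both hypotheses reduce to $\mu\|\alpha\|<r/(r+\|u_0\|)$, and since $\mu\|\alpha\|<1$ this can be arranged for all sufficiently large $r$; one then obtains a unique fixed point in $B(u_0,r)$ for every $r>r_0$ and concludes uniqueness on the whole space by letting the ball exhaust $C^0(\overline{\Omega})$. You bypass this by observing that once $\beta=0$ the Lipschitz constant $\mu\|\alpha\|$ from Lemma~\ref{lipbound} is independent of any radius $R$, so the contraction is global and Banach's theorem applies directly on all of $C^0(\overline{\Omega})$. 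Your Neumann-series remark is a bonus that identifies the fixed point explicitly. The paper's route has the minor virtue of reusing the ball-based lemmas already in place; yours is more direct and makes the linear structure transparent.
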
	
	\begin{proof}

In this case, Lemma \ref{mapping} and Lemma \ref{contraction} are both satisfied if 
$$ \mu \Vert \alpha\Vert < \frac{r}{r+\| u_0 \|}  .$$  Since  by assumption $ \mu  \Vert\alpha\Vert < 1$,  there exists $r_0$ such that for any $r > r_0$ 
$$ \mu  \Vert\alpha\Vert <   \frac{r}{r+\| u_0 \|}  < 1. $$
Therefore $T$ has a unique fixed point on $B(u_0, r)$ for any $r> r_0$, so the fixed point must be unique on all of $C^0(\Omega)$.	\end{proof}
\begin{proposition}
		If $\alpha=0$ and $\beta < \frac{4}{27\mu\Vert u_0\Vert^2}$, then $T$ has a unique fixed point in the ball $B(u_0, \Vert u_0 \Vert/2)$. 
	\end{proposition}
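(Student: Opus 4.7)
The plan is to apply Lemmas \ref{mapping} and \ref{contraction} directly, with the specific choice $r = \|u_0\|/2$ corresponding to the ball in the statement. First I would set $r = \|u_0\|/2$, which gives $R = \|u_0\| + r = 3\|u_0\|/2$. With $\alpha = 0$, the mapping condition of Lemma \ref{mapping} reduces to $\mu R^3 \|\beta\| < r$, which upon substituting becomes
\[
\mu \cdot \frac{27\|u_0\|^3}{8}\|\beta\| < \frac{\|u_0\|}{2},
\]
i.e.\ $\|\beta\| < \frac{4}{27\mu\|u_0\|^2}$. The contraction condition of Lemma \ref{contraction} reduces (with $\alpha = 0$) to $3\mu R^2 \|\beta\| < 1$, which becomes
\[
3\mu \cdot \frac{9\|u_0\|^2}{4}\|\beta\| < 1,
\]
i.e.\ the same bound $\|\beta\| < \frac{4}{27\mu\|u_0\|^2}$.

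So, under the hypothesis $\|\beta\| < \frac{4}{27\mu\|u_0\|^2}$, both Lemma \ref{mapping} and Lemma \ref{contraction} apply on $B(u_0, \|u_0\|/2)$. Hence $T$ maps this ball into itself and is a contraction on it, and the Banach fixed point theorem yields a unique fixed point in $B(u_0, \|u_0\|/2)$.

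There is no real obstacle here, but it is worth noting (perhaps as a short remark at the end) why the radius $\|u_0\|/2$ appears and why the constant $4/27$ is natural. Writing $r = s\|u_0\|$ and $R = (1+s)\|u_0\|$, the mapping condition gives $\|\beta\| < s/[\mu(1+s)^3\|u_0\|^2]$ and the contraction condition gives $\|\beta\| < 1/[3\mu(1+s)^2\|u_0\|^2]$. Balancing these two by setting $s/(1+s)^3 = 1/[3(1+s)^2]$ yields $s = 1/2$, at which point both bounds equal $4/27$. So $r = \|u_0\|/2$ is precisely the radius that maximizes the admissible range of $\|\beta\|$, explaining both the constant in the hypothesis and the specific ball in the conclusion.
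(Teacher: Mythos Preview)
Your proof is correct and follows exactly the same approach as the paper: set $r=\|u_0\|/2$, compute $R=3\|u_0\|/2$, and verify that the hypothesis $\|\beta\|<4/(27\mu\|u_0\|^2)$ yields both the self-mapping condition of Lemma~\ref{mapping} and the contraction condition of Lemma~\ref{contraction}, then invoke the Banach fixed point theorem. Your closing remark explaining why $r=\|u_0\|/2$ is the optimal radius is a nice addition not present in the paper.
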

	\begin{proof}
		Let $r=\Vert u_0\Vert/2$. This means that $R=3\Vert u_0\Vert/2$. Then, 
		\begin{align*}
			\mu R\left(\Vert\alpha\Vert+R^2\Vert\beta\Vert\right) &= \mu R^3\Vert\beta\Vert\\
			&=\mu (3\Vert u_0\Vert/2)^3\Vert\beta\Vert\\
			&< \frac{27\mu \Vert u_0\Vert^3}{8}\cdot\frac{4}{27\mu \Vert u_0\Vert^2}\\
			&= \frac{\Vert u_0\Vert}{2} = r
		\end{align*}
Thus, the hypothesis needed for Lemma \ref{mapping} is satisfied. Additionally, we have that 
		\begin{align*}
			\mu \left(\Vert\alpha\Vert + 3R^2\Vert\beta\Vert\right) &= 3R^2\mu \Vert\beta\Vert\\
			&= 3(3\Vert u_0\Vert/2)^2\mu \Vert\beta\Vert\\
			&< \frac{27\mu \Vert u_0\Vert^2}{4}\cdot \frac{4}{27\mu \Vert u_0\Vert^2}\\
			&= 1,
		\end{align*}
so that the condition for Lemma \ref{contraction} to hold is also satisfied, and hence $T$ has a unique fixed point on $B(u_0, r)$. \end{proof}
\begin{proposition}\label{boundsapp}
If there exists some $\gamma > 1/2 $ such that $$\Vert\alpha\Vert < \frac{2\gamma-1}{2\mu (1+\gamma)}$$ and $$\Vert\beta\Vert < \frac{1}{2\mu \Vert u_0\Vert^2(1+\gamma)^3},$$ then $T$ has a unique fixed point in the ball $B(u_0, \gamma\Vert u_0\Vert)$.\end{proposition}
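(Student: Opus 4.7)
The plan is straightforward: verify that the hypotheses of Lemma~\ref{mapping} and Lemma~\ref{contraction} are both satisfied with the specific choice $r = \gamma \|u_0\|$, so that $R = \|u_0\| + r = (1+\gamma)\|u_0\|$, and then invoke the Banach fixed point theorem to conclude uniqueness of the fixed point on $B(u_0, \gamma\|u_0\|)$.

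First I would check the condition required by Lemma~\ref{mapping}, namely $\mu R(\|\alpha\| + R^2 \|\beta\|) < r$. Splitting this into two terms and using the two assumed bounds separately, one obtains
\begin{equation*}
\mu R \|\alpha\| < \mu (1+\gamma)\|u_0\| \cdot \frac{2\gamma-1}{2\mu(1+\gamma)} = \frac{(2\gamma-1)\|u_0\|}{2},
\end{equation*}
and
\begin{equation*}
\mu R^3 \|\beta\| < \mu (1+\gamma)^3 \|u_0\|^3 \cdot \frac{1}{2\mu \|u_0\|^2(1+\gamma)^3} = \frac{\|u_0\|}{2}.
\end{equation*}
Adding these two inequalities yields $\mu R(\|\alpha\| + R^2\|\beta\|) < \gamma\|u_0\| = r$, so the hypothesis of Lemma~\ref{mapping} is verified. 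Observe that the bounds in the statement have been designed precisely so that each piece contributes half of the budget $r$.

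Next I would verify the contraction condition $\mu(\|\alpha\| + 3R^2 \|\beta\|) < 1$ from Lemma~\ref{contraction}. The two hypotheses give
\begin{equation*}
\mu\|\alpha\| < \frac{2\gamma-1}{2(1+\gamma)}, \qquad 3\mu R^2 \|\beta\| < 3\mu(1+\gamma)^2\|u_0\|^2 \cdot \frac{1}{2\mu\|u_0\|^2(1+\gamma)^3} = \frac{3}{2(1+\gamma)},
\end{equation*}
and summing these produces $\mu(\|\alpha\| + 3R^2\|\beta\|) < \frac{(2\gamma-1)+3}{2(1+\gamma)} = \frac{2(1+\gamma)}{2(1+\gamma)} = 1$, as needed. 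The algebraic cancellation here is the reason for the specific coefficients $2\gamma-1$ and $(1+\gamma)$ appearing in the hypotheses.

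With both hypotheses verified, Lemma~\ref{contraction} gives that $T$ is a contraction on the closed ball $B(u_0, \gamma\|u_0\|)$ that maps into itself, and the Banach fixed point theorem supplies a unique fixed point there. No step looks like a genuine obstacle, because the form of the numerators $2\gamma-1$ and $1$ (and the denominator factors $(1+\gamma)$ and $(1+\gamma)^3$) is exactly tailored to make the two conditions $r$-budget and $1$-budget split evenly between the linear and cubic contributions; the only mild subtlety is checking that the resulting bounds in Lemma~\ref{mapping} and Lemma~\ref{contraction} are strict rather than non-strict, which follows from strict inequalities in the hypotheses.
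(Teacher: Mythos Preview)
Your proposal is correct and follows exactly the same approach as the paper: set $r=\gamma\|u_0\|$, $R=(1+\gamma)\|u_0\|$, and verify the hypotheses of Lemma~\ref{mapping} and Lemma~\ref{contraction} directly from the two assumed bounds. The only difference is a minor one in arithmetic: in the contraction estimate the paper inadvertently drops the factor $3$ in front of $R^2\|\beta\|$, obtaining the (too small) bound $\frac{\gamma}{1+\gamma}$, whereas your computation correctly retains it and yields the sharp bound $\frac{2\gamma+2}{2(1+\gamma)}=1$; either way the strict inequality $<1$ follows from the strict hypotheses, so the conclusion is unaffected.
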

\begin{proof}
Let $r=\gamma\Vert u_0\Vert$, which that $R=(1+\gamma)\Vert u_0\Vert$.
The hypotheses then imply that Lemma \ref{mapping} holds, because 
		\begin{align*}
			\mu R(\Vert\alpha\Vert + R^2\Vert\beta\Vert) &=\mu (1+\gamma)\Vert u_0\Vert(\Vert\alpha\Vert + (1+\gamma)^2\Vert u_0\Vert^2\Vert\beta\Vert)\\
			&= \mu (1+\gamma)\Vert u_0\Vert\left(\frac{2\gamma-1}{2\mu (1+\gamma)}+\frac{1}{2\mu (1+\gamma)}\right)\\
			&= \Vert u_0\Vert\left(\frac{2\gamma-1}{2}+\frac{1}{2}\right)\\
			&= \gamma\Vert u_0\Vert = r.		
		\end{align*}
For Lemma \ref{contraction}, we have that
		\begin{align*}
			\mu (\Vert\alpha\Vert + 3R^2\Vert\beta\Vert) &= \mu (\Vert\alpha\Vert+3(1+\gamma)^2\Vert u_0\Vert^2\Vert\beta\Vert)\\
			&< \mu \left(\frac{2\gamma-1}{2\mu (1+\gamma)} + (1+\gamma)^2\Vert u_0\Vert^2\frac{1}{2\mu \Vert u_0\Vert^2(1+\gamma)^3}\right)\\
			&= \frac{2\gamma-1}{2(1+\gamma)}+\frac{1}{2(1+\gamma)}\\
			&= \frac{\gamma}{1+\gamma} < 1.
		\end{align*}
Thus $T$ has a unique fixed point on the ball $B(u_0, \gamma\Vert u_0\Vert)$. \end{proof}

\end{document}